\newcommand{\hieu}[1]{\textcolor{black}{{#1}}}
\DeclareMathOperator{\Q}{\mathbb{Q}}
\DeclareMathOperator{\R}{\mathbb{R}}
\DeclareMathOperator{\N}{\mathbb{N}}
\DeclareMathOperator{\C}{\mathbb{C}}
\DeclareMathOperator{\I}{\mathcal{I}}
\DeclareMathOperator{\D}{\mathcal{D}}
\DeclareMathOperator{\LT}{LT}
\DeclareMathOperator{\lex}{lex}
\DeclareMathOperator{\rad}{rad}
\DeclareMathOperator{\Sol}{Sol}
\DeclareMathOperator{\vecx}{vec}
\DeclareMathOperator{\lnn}{ln}
\DeclareMathOperator{\Solln}{Sol_{\lnn}}
\DeclareMathOperator{\spp}{sp}
\DeclareMathOperator{\Solsp}{Sol_{\spp}}
\DeclareMathOperator{\CP}{CP}
\DeclareMathOperator{\shape}{\mathtt{shape}}
\begin{document}
\setlength{\baselineskip}{12truept}
\title{Univariate Representations of Solutions to Generic Polynomial Complementarity Problems }
\titlerunning{Univariate Representations of Solutions to Generic Polynomial CPs}
\author{Vu Trung Hieu \and Alfredo Noel Iusem \and Paul Hugo Schm\"{o}lling  \and Akiko Takeda}

\institute{
 Vu Trung Hieu (trunghieu.vu@riken.jp, hieuvut@gmail.com)\\
Center for Advanced Intelligence Project, RIKEN\\
103-0027, Tokyo, Japan
\medskip  \\
 Alfredo Noel Iusem (alfredo.iusem@fgv.br)
\\ 
School of Applied Mathematics, Fundação Getúlio Vargas\\
 Rio de Janeiro, RJ 22250-900, Brazil
\medskip \\
Paul Hugo Schm{\"o}lling (paul.schmolling@ntnu.no)\\
Department of Mathematical Sciences, Norwegian University of Science and Technology 
\\ 7491, Trondheim, Norway
\medskip \\
Akiko Takeda (takeda@mist.i.u-tokyo.ac.jp, akiko.takeda@riken.jp)\\
Graduate School of Information Science and Technology\\
The University of Tokyo\\
113-8656, Tokyo, Japan
 \smallskip \\ 
Center for Advanced Intelligence Project, RIKEN \\
103-0027, Tokyo, Japan
}

\date{
}

\maketitle

\medskip

\begin{abstract}
\hieu{By using the squared slack variables technique, we demonstrate that the solution set of a general polynomial complementarity problem is the image, under a specific projection, of the set of real zeroes of a system of polynomials.}
This paper points out that, generically, this polynomial system has finitely many complex zeroes. In such a case, we use symbolic computation techniques to compute a univariate representation of the solution set. Consequently, univariate representations of special solutions, such as least-norm and sparse solutions, are obtained. After that, enumerating solutions boils down to solving problems governed by univariate polynomials. We also provide some experiments on small-scale problems with worst-case scenarios. At the end of the paper, we propose a method for computing approximate solutions to copositive polynomial complementarity problems that may have infinitely many solutions. 
\end{abstract}

\keywords{polynomial complementarity problem \and solution set \and least-norm solution \and sparse solution \and  zero-dimensional ideal \and univariate representation \and Gr\"{o}bner basis \and  Shape Lemma}

\subclass{90C33 \and 13P10 \and 13P15 \and 65H04}

\section{Introduction}
Let $f:\R^n \to \R^n$ be a mapping. The complementarity problem, named as in \cite{facchinei2003finite}, given for  $f$ is the problem of finding solutions to the following system:
\[x \geq 0, \ f (x) \geq 0, \  \left\langle x \mid f(x) \right\rangle =0,\]
where $\langle \alpha \mid \beta\rangle$ is the usual scalar product of two vectors $\alpha, \beta$ in the Euclidean space $\R^n$.
If $f$ is an affine mapping, the problem becomes a linear complementarity problem, first introduced by Cottle and Dantzig \cite{cottle1968complementary}. If $f$ is a real polynomial mapping, i.e. all the components of $f$ are polynomial, then the problem is called a polynomial complementarity problem (PCP, for short) \cite{gowda2016polynomial}. Moreover, a PCP is a special case of the polynomial variational inequality \cite{hieu2020solution}.

Qualitative properties of PCPs and extension versions have recently been studied; such as existence, stability, and compactness of the solution sets \cite{gowda2016polynomial,hieu2020solution,zheng2020nonemptiness,hieu2020notes,ling2019properties,wang2020existence,pham2021complementary}, bounds of the solution sets \cite{ling2018error,xu2024bounds,li2024lower} or strict feasibility \cite{li2024strict}. The method using semidefinite relaxations in \cite{zhao2019semidefinite} can be modified to enumerate numerically the solutions of a general PCP with finitely many solutions. Computing solutions to a general PCP is a difficult problem, even such a task for problems governed by affine mappings is an NP-hard problem \cite{chung1989np}.

In the case where $f$ is defined by a tensor, the problem is called a tensor complementarity problem \cite{che2016positive,song2016tensor}. Multi-player non-cooperative games, hyper-graph clustering problems, and traffic equilibrium problems can be modeled as tensor complementarity problems \cite{huang2019tensor}.
In the literature, there are several works on solving such tensor problems \cite{qi2019tensor} by using methods based on systems of tensor equations \cite{luo2017sparsest,xu2019equivalent,xie2017iterative}, potential reduction \cite{zhang2019potential}, homotopy continuation \cite{han2019continuation},  smoothing reformulations \cite{huang2017formulating}, the modulus-based reformulation \cite{liu2018tensor}, reformulated programming problems \cite{luo2017sparsest,xie2017iterative}, reformulated integer programming \cite{du2019mixed}, and semidefinite relaxations \cite{zhao2019semidefinite}.

This work is motivated by open problems raised by Huang and Qi in \cite[Problem~1]{huang2019tensor} and \cite[Problem~4]{qi2019tensor} related to using techniques from semi-algebraic geometry to study tensor complementarity problems and the number of solutions in the case where the solution set has finitely many points. Our proposed method addresses these issues for a broad class of problems that includes tensor problems. We are also interested in algorithms computing least-norm and sparse solutions of PCPs; It should be recalled that several algorithms have been proposed for linear or tensor problems, but with special structures \cite{chen2016sparse,xie2017iterative,luo2017sparsest}.

In this paper, we employ the squared slack variables technique, which has been used by many researchers, see e.g. \cite{fukuda2017note} and references therein, to demonstrate that the solution set of a PCP is the image of a real algebraic set under a certain projection, see \Cref{prop:Sol_slack}. This result indicates that the problem is solvable if and only if the real algebraic set is nonempty.

We introduce a class of PCPs given by polynomial mappings $f$ such that the polynomial systems $\{x_1f_1,\dots,x_nf_n\}$ have finitely many complex zeros. A PCP belonging to this class is called $\D_0$, and it should be highlighted that, as shown in \Cref{prop:genericity}, the property of being $\D_0$ is generic.
Using Gr\"{o}bner bases, the Shape Lemma, and results from computational commutative algebra, we prove in \Cref{thm:solCP} the existence of a univariate representation for the solution set of a $\D_0$ problem. Based on the proof, we design an algorithm to compute such a representation, see \Cref{alg:Sol}. Additionally, algorithms are proposed to compute univariate representations of least-norm or sparse solutions. Once a univariate representation is obtained, we can enumerate the solutions by using numerical computation. Furthermore, we discuss the sharp upper bound on the number of solutions of a $\D_0$ problem based on the well-known Bézout bound.

We are also interested in solving PCPs that may not be $\D_0$, whose solution sets are infinite. This paper discusses copositive polynomial mappings and proposes a method to approximate their solutions.

The rest of this paper consists of six sections. Section \ref{sec:pre} gives some definitions, notations, and auxiliary results of polynomial complementarity problems and computational commutative algebra. Section \ref{sec:slack} gives results concerning the squared slack variables technique. Results on the class of $\D_0$ problems are presented in \Cref{sec:CP} and \Cref{sec:compRn}.
\Cref{sec:enum} is about computing numerical solutions for $\D_0$ problems and gives some experiments. The last section, \Cref{sec:copositive}, discusses copositive problems.

\section{Preliminaries}\label{sec:pre}
This section recalls basic notions and
results from complementarity problems and computational commutative algebra. Further details can be found in \cite{facchinei2003finite} and \cite{cox2013}.

\subsection{Polynomial Complementarity Problems}
Let $\R$ denote the field of real numbers. The notation $\langle \alpha \mid \beta\rangle$ is used for the usual scalar product of two vectors $\alpha, \beta$ in the Euclidean space $\R^n$, whereas $\|\alpha\|$ denotes the Euclidean norm of $\alpha$. The index set $\{1,\dots,n\}$ is written as  $[n]$ for brevity.

Let $f=(f_1,\dots,f_n): \R^n \to \R^n$ be a real polynomial mapping.
The \textit{polynomial complementarity problem}, introduced in \cite{gowda2016polynomial}, for $f$ is the problem of finding a solution $\alpha\in \R^n$  of the following system:
\begin{equation}\label{eq:cpc}
     x \geq 0, \ f  (x) \geq 0, \  \left\langle x \mid f(x) \right\rangle =0.
\end{equation}

The problem and its solution set are, respectively, denoted by $\CP(f)$ and $\Sol(f)$. Suppose that $\alpha$ is a solution, so the first two conditions in \eqref{eq:cpc} say that $\alpha$ and $f(\alpha)$ belong concurrently to the non-negative orthant of $\R^n$, and the last condition means that $\alpha$ must belong to the surface given by the following equation:
\begin{equation}\label{eq:sumfx}
    x_1f_1(x) +\dots+ x_nf_n(x) =0.
\end{equation}
\begin{remark}
    It is not difficult to see that $\alpha$ is a solution of $\CP(f)$ if and only if $\alpha$ satisfies the following system:
    \begin{equation}\label{eq:fxzero}
x \geq 0, \ f(x) \geq 0, \  x_1f_1(x) = \dots =  x_nf_n(x) = 0.
    \end{equation}
In this setting, the polynomial equation \eqref{eq:sumfx} can be written equivalently as a square polynomial system, i.e., as $n$ polynomial equations in $n$ variables. This is the key idea allowing us to apply techniques in solving polynomial equations, especially when the square system has finitely many complex roots.
\end{remark}

A \textit{least-norm} solution of $\CP(f)$ is defined as a solution to the following optimization problem:
\begin{equation}\label{eq:minnorm}
    \min \{\|\alpha\|: \alpha \in\Sol(f)\}.
\end{equation}
We denote by $\Solln(f)$ the set of least-norm solutions of $\CP(f)$. Generally, $\Sol(f)$ is not convex, which implies that the cardinality of $\Solln(f)$ may not be one. It should be noted that if we replace the norm function $\|\cdot\|$ in \eqref{eq:minnorm}  by its square $\|\cdot\|^2$, then the set $\Solln(f)$ does not change. 

One defines a \textit{sparse} solution of $\CP(f)$ as a solution to the following optimization problem:
\begin{equation*}\label{eq:norm_0}
    \min \{\|\alpha\|_0: \alpha \in\Sol(f)\},
\end{equation*}
where $\|\alpha\|_0$ is the number of nonzero coordinates of $\alpha$. The set of sparse solutions of $\CP(f)$ is denoted by $\Solsp(f)$.

\subsection{Zero-dimensional Ideals and the Shape Lemma}
We denote by $\C$ the field of complex numbers that is the algebraic closure of the field $\R$. The ring of all polynomials in variables $x=(x_1,\dots,x_n)$
 with real coefficients is denoted by $\R[x]$.
An additive subgroup $\I$ of $\R[x]$  is said
to be an \textit{ideal} of $\R[x]$ if $pq\in \I$ for any $p\in \I$ and
$q\in\R[x]$. Given $p_1,\dots,p_s $ in $ \R[x]$, we denote by
$\left\langle
p_1,\dots,p_s \right\rangle$ the ideal generated by $p_1,\dots,p_s$. If $\I$
is an ideal of $\R[x]$ then, according to Hilbert's basis theorem
(see, for example Theorem~4 in \cite[Chapter 2, \S 5]{cox2013}),
there exist $p_1,\dots,p_s \in \R[x]$ such that
$\I=\left\langle p_1,\dots,p_s \right\rangle$.

Let $\I$ be an ideal of $\R[x]$. The \textit{complex
    algebraic variety} associated to $\I$ is defined as
\[V_{\C}(\I)\coloneqq\{x\in{\C}^n:g(x)=0, \forall g\in \I\}.\] The \textit{real} algebraic
variety associated to $\I$ is $V_{\R}(\I)\coloneqq V_{\C}(\I)\cap\R^n$. Recall that the ideal $\I$
is \textit{zero-dimensional} if the
cardinality of $V_{\C}(\I)$, denoted by $\# V_{\C}(\I)$, is finite. The \textit{radical} of $\I$ is the following set:
\[\rad(\I)\coloneqq\{p\in\R[x]:p^k\in \I \text{ for some }k \in \N\}.\]
The set
$\rad(\I)$ is also an ideal that contains $\I$.
One says that the ideal $\I$ is \textit{radical} if $\I=\rad(\I)$.

The following lemma asserts that the radical of an ideal is radical; Moreover, the complex (real) varieties of $\I$ and of its radical coincide, see, for example, Lemma~5 and Theorem~7  in \cite[Chapter 4, \S 2]{cox2013}:

\begin{lemma}\label{lm:radC}Let $\I$ be an ideal of $\R[x]$. Then, $\rad(\I)$ is radical. Moreover, one has $V_{\C}(\I)=V_{\C}(\rad(\I))$, hence $V_{\R}(\I)=V_{\R}(\rad(\I))$.
\end{lemma}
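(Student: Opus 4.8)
The plan is to prove the three assertions in turn, all by elementary manipulations with the defining property of the radical. Throughout I would use only that $\C$ is a field (hence an integral domain) and that $\R[x]$ is a commutative ring.

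First I would check that $\rad(\I)$ is genuinely an ideal, since the statement presupposes this. Closure under multiplication by ring elements is immediate: if $p^k \in \I$ then $(qp)^k = q^k p^k \in \I$ for every $q \in \R[x]$. For closure under addition, take $p,q \in \rad(\I)$ with $p^k \in \I$ and $q^m \in \I$; expanding $(p+q)^{k+m-1}$ by the binomial theorem, every monomial $p^i q^{k+m-1-i}$ has either $i \ge k$ or $k+m-1-i \ge m$, so each term lies in $\I$, whence $(p+q)^{k+m-1} \in \I$ and $p+q \in \rad(\I)$. This binomial bookkeeping is the only step with any content, and it is entirely routine.

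Next, to see $\rad(\I)$ is radical, I would show $\rad(\rad(\I)) \subseteq \rad(\I)$ (the reverse inclusion being trivial). If $p \in \rad(\rad(\I))$, then $p^k \in \rad(\I)$ for some $k \in \N$, and therefore $(p^k)^\ell = p^{k\ell} \in \I$ for some $\ell \in \N$; hence $p \in \rad(\I)$.

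Finally, for the varieties: since $\I \subseteq \rad(\I)$, one has $V_{\C}(\rad(\I)) \subseteq V_{\C}(\I)$ directly from the definition. Conversely, let $a \in V_{\C}(\I)$ and $p \in \rad(\I)$, say $p^k \in \I$. Then $p(a)^k = (p^k)(a) = 0$ in $\C$, and since $\C$ has no zero divisors this forces $p(a) = 0$; as $p$ was arbitrary, $a \in V_{\C}(\rad(\I))$. Thus $V_{\C}(\I) = V_{\C}(\rad(\I))$, and intersecting both sides with $\R^n$ gives $V_{\R}(\I) = V_{\R}(\rad(\I))$. I do not anticipate any real obstacle here; the statement is a standard fact (e.g.\ Lemma~5 and Theorem~7 in \cite[Chapter~4, \S2]{cox2013}), and the proof is a direct unwinding of definitions.
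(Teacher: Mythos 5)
Your proof is correct, and it simply spells out the standard definition-unwinding argument (ideal closure via the binomial expansion, nilpotence of $p(a)$ in the integral domain $\C$, and taking real points) that the paper itself delegates to Lemma~5 and Theorem~7 of \cite[Chapter~4, \S2]{cox2013} without reproving. Nothing is missing and nothing is done differently in substance; the paper just cites the reference instead of writing the proof out.
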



Let $<$ be a monomial ordering on $\R[x]$
and $\I\neq \{0\}$ be an ideal of $\R[x]$. We denote
by $\LT_<(\I)$ the set of all leading terms $\LT_<(p)$ for $p\in \I$, and  by
$\left\langle \LT_<(\I)\right\rangle $ the ideal generated by the elements
of $\LT_<(\I)$. A subset $G=\{p_1,\dots,p_s\}$ of $\I$ is said to be a \textit{Gr\"{o}bner
    basis} of $\I$ with respect to the monomial ordering $<$ if
\[ \left\langle\LT_<(p_1),\dots,\LT_<(p_s)\right\rangle=\left\langle\LT_<(\I)\right\rangle.\]
Note that a Gr\"{o}bner basis of $\I$ really is a generating set of the ideal $\I$, i.e. $\left\langle p_1,\dots,p_s\right\rangle=\I$ and every ideal in $\R[x]$ has a Gr\"{o}bner basis. 
A Gr\"{o}bner basis $G$ is \textit{reduced} if the two following conditions hold: i) the leading coefficient of $p$ is $1$, for all $p\in G$; ii) for all $p\in G$, there are no monomials of $p$ lying in $\left\langle \LT_<(G\setminus\{p\})\right\rangle$. Every ideal $\I$ has a unique reduced Gr\"{o}bner basis. We refer the reader to \cite{cox2013} for more details.

Let $\I$ be a zero-dimensional and radical ideal in $\R[x]$, and
$G$ be its reduced Gr\"{o}bner basis with respect to a
lexicographic ordering with $x_1<_{\lex} \cdots <_{\lex} x_n$. One says that $\I$ is
in \textit{shape position} if $G$ has the following shape:
\begin{equation}\label{f:sysshape}
    G=[w,x_2-u_2,\dots,x_n-u_n],
\end{equation}
where $w,u_2,\dots,u_n$ are polynomials in
$\R[x_1]$ and $\deg w=\# V_{\C}(\I)$.  
When the order $<_{\lex}$ is clear from the context, we omit the subscript $\lex$ in the above notation.

The following lemma, named Shape Lemma, gives us a criterion for the shape
position of an ideal.

\begin{lemma}{\rm{(Shape Lemma, \cite{gianni89})}}\label{lm:shape}
    Let $<_{\lex}$ be a lexicographic monomial order\-ing in $\R[x]$. Suppose that $\I$ is a zero-dimensional and radical ideal. If the points in $V_{\C}(\I)$  have distinct
    $x_1$-coordinates, then $\I$ can be made into
    shape position as in \eqref{f:sysshape}, where $u_2,\dots,u_n$ are
    polynomials in $\R[x_1]$ of degrees at most $\# V_{\C}(\I)-1$.
\end{lemma}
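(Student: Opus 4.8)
The plan is to pass to the quotient algebra $A \coloneqq \R[x]/\I$ and analyze the $\R$-linear operator of multiplication by $x_1$ on it. Set $d \coloneqq \# V_{\C}(\I)$. Since $\I$ is zero-dimensional, $A$ is a finite-dimensional $\R$-vector space; since $\I$ is radical and $\R$ has characteristic zero---so that $\R \subseteq \C$ is separable and the extended ideal $\I\C[x]$ remains radical---one gets $\dim_{\R} A = \dim_{\C}\bigl(\C[x]/\I\C[x]\bigr) = d$. I will also use that the standard monomials of $\I$ with respect to $<_{\lex}$ (the monomials not in $\langle\LT_{<}(\I)\rangle$) form an $\R$-basis of $A$, so there are exactly $d$ of them.

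Let $m \colon A \to A$ be the map $\bar p \mapsto \overline{x_1 p}$. By Stickelberger's eigenvalue theorem for zero-dimensional ideals, the eigenvalues of $m$ over $\C$ are precisely the $x_1$-coordinates of the points of $V_{\C}(\I)$. By hypothesis these $d$ numbers are pairwise distinct, so $m$ has $d = \dim_{\R} A$ distinct eigenvalues and its minimal polynomial $w \in \R[x_1]$ is monic of degree exactly $d$. Since no nonzero polynomial of degree $< d$ annihilates $m$, the classes $1, x_1, \dots, x_1^{d-1}$ are linearly independent in $A$, and being $d$ independent vectors in a $d$-dimensional space, they form a basis of $A$.

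Expanding the remaining generators in this basis, for each $j \in \{2,\dots,n\}$ there is a unique $u_j \in \R[x_1]$ with $\deg u_j \le d-1$ and $x_j - u_j(x_1) \in \I$; likewise $w(x_1) \in \I$ because $w(m) = 0$. Hence $\J \coloneqq \langle w,\, x_2 - u_2,\, \dots,\, x_n - u_n\rangle \subseteq \I$. For the reverse inclusion, note that modulo $\J$ one may substitute $x_j \mapsto u_j(x_1)$ for $j \ge 2$ and then reduce modulo $w$, so $\R[x]/\J$ is $\R$-spanned by $1, x_1, \dots, x_1^{d-1}$ and thus $\dim_{\R} \R[x]/\J \le d$; as $\R[x]/\J$ surjects onto the $d$-dimensional algebra $\R[x]/\I$, this surjection is an isomorphism and $\J = \I$.

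It remains to verify that $[w,\, x_2 - u_2,\, \dots,\, x_n - u_n]$ is the reduced $<_{\lex}$-Gr\"obner basis of $\I$. Since $\deg u_j \le d - 1 < d = \deg w$, the leading terms are $x_1^d, x_2, \dots, x_n$; these lie in $\langle\LT_{<}(\I)\rangle$, and the monomials outside $\langle x_1^d, x_2, \dots, x_n\rangle$ are exactly $1, x_1, \dots, x_1^{d-1}$, again $d$ of them, matching the number of standard monomials of $\I$. Hence $\langle\LT_{<}(\I)\rangle = \langle x_1^d, x_2, \dots, x_n\rangle$, so the list is a Gr\"obner basis; it is reduced because $w$ is monic, the other leading coefficients are $1$, and every monomial appearing in these polynomials is one of $x_1^d, x_2, \dots, x_n$ or lies in $\{1, x_1, \dots, x_1^{d-1}\}$, none of which is divisible by a leading term of a different element. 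This is exactly the shape \eqref{f:sysshape}, with the claimed bounds $\deg u_j \le \# V_{\C}(\I) - 1$. The step requiring the most care is the interface between the real and complex pictures: justifying $\dim_{\R}\R[x]/\I = \# V_{\C}(\I)$ for an ideal radical \emph{over} $\R$---equivalently, that $\I\C[x]$ is still radical, which is where characteristic zero enters---together with the precise form of Stickelberger's theorem and the resulting equivalence between the $x_1$-coordinates of $V_{\C}(\I)$ being distinct and $m$ being diagonalizable with $d$ distinct eigenvalues. Everything after that is routine linear algebra in the basis $\{1, x_1, \dots, x_1^{d-1}\}$.
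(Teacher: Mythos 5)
The paper states the Shape Lemma as a citation to Gianni and Mora and gives no proof of its own, so there is no in-paper argument to compare against; your job was therefore to supply a self-contained proof, and you have done so correctly using the standard multiplication-operator approach. A few remarks on the interface points you rightly flagged as the delicate ones. Your claim that $\I\C[x]$ is radical because $\R$ has characteristic zero is exactly right and is the only place perfectness of the base field is needed: $A=\R[x]/\I$ is a finite reduced $\R$-algebra, hence a product of finite field extensions of $\R$, each separable, so $A\otimes_\R\C$ is a product of copies of $\C$ and is reduced, giving $\dim_\C\C[x]/\I\C[x]=\#V_\C(\I)$; the equality $\dim_\R A=\dim_\C A\otimes_\R\C$ itself is flat base change and does not need radicality, which you slightly conflate but the argument is unaffected. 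Your use of Stickelberger is also correct, and you quietly need (and implicitly use) that the minimal polynomial of the real operator $m$ over $\R$ coincides with the minimal polynomial of $m_\C$ over $\C$; since the latter, being $\prod_i(t-\lambda_i)$ with the $\lambda_i$ closed under conjugation, has real coefficients, the two agree. From there the linear-independence of $1,x_1,\dots,x_1^{d-1}$, the construction of $u_j$ and $w$, the dimension count showing $\J=\I$, and the verification that $\langle\LT_<(\I)\rangle=\langle x_1^d,x_2,\dots,x_n\rangle$ by comparing the $d$ standard monomials on each side are all complete and correct, including the check that the resulting basis is reduced. In short: a correct, fully self-contained proof by the classical eigenvalue method, filling in a result the paper only quotes.
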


\section{The Squared Slack Variables Technique for General PCPs
}\label{sec:slack}
Using the squared slack variables technique, we show in this section that the solution set of $\CP(f)$ is the image of a real algebraic set under a certain projection.

Denote by $\I[f]$ the ideal in $\R[x]$ generated by the $n$ polynomial products $x_if_i$, $i\in[n]$, i.e.
\begin{equation}\label{eq:If}
    \I[f] \coloneqq\left\langle x_1f_1,\dots,x_nf_n \right\rangle.
\end{equation}
We introduce $2n$ squared slack variables $z=(z_1,\dots,z_{2n})$ and denote by $\I[f,z]$ the ideal in $\R[x,z]$ generated by the $n$ polynomials defining $\I[f]$, the $n$ quadratic polynomials $z_i^2-x_i$, $i\in[n]$, and the $n$ polynomials $z_{n+i}^2-f_i$, $i\in[n]$, i.e.,
\begin{equation}\label{eq:Ifz}
\I[f,z] = 
    \left\langle x_1f_1,\dots,x_nf_n, z_1^2-x_1,\dots, z_n^2-x_n, z_{n+1}^2-f_1,\dots, z_{2n}^2-f_n \right\rangle.
\end{equation}

Here and in the following, $\sqrt{c}$ stands for the set of the (two) complex square roots of a complex number $c$, and $\pi_n:\C^{n+m} \to \C^n$, for some non-negative integer $m$, is the projection onto the first $n$ coordinates.

\begin{remark}\label{rmk:z2}
Let $(\alpha,\beta)\in\C^{n+2n}$ with $\alpha=(\alpha_1,\dots,\alpha_{n})$. By the definition of $\I[f,z]$, it is not difficult to see that $(\alpha,\beta)$ is a point in $V_{\C}(\I[f,z])$ if and only if $\alpha \in \I[f]$ and $\beta$ is an element of the following set which is defined by a $2n$-fold Cartesian product:
\begin{equation}\label{eq:sqrt}
    \sqrt{\alpha_1} \times \dots \times \sqrt{\alpha_n} \times \sqrt{f_1(\alpha)} \times \dots \times \sqrt{f_n(\alpha)}.
\end{equation}
Furthermore, from the fact that $\#\sqrt{c}$ is finite, for $c\in \C$, we conclude that the set \textit{$V_{\C}(\I[f])$ is finite if any only if so is $V_{\C}(\I[f,z])$}.
\end{remark}

\begin{proposition}\label{prop:Sol_slack} One has the following equality:
    \begin{equation}\label{eq:Sol_slack}
        \Sol(f)=\pi_n\big(V_{\R}(\I[f,z])\big).
    \end{equation}
\end{proposition}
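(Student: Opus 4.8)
The plan is to prove the two inclusions in \eqref{eq:Sol_slack} separately, using \Cref{rmk:z2} to translate membership in $V_{\C}(\I[f,z])$ (and hence in $V_{\R}(\I[f,z])$) into the componentwise square-root description \eqref{eq:sqrt}, together with the equivalent reformulation \eqref{eq:fxzero} of $\CP(f)$ in the preceding remark.

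First I would show $\Sol(f)\subseteq \pi_n\big(V_{\R}(\I[f,z])\big)$. Let $\alpha\in\Sol(f)$. By \eqref{eq:fxzero}, we have $\alpha_i\ge 0$, $f_i(\alpha)\ge 0$ and $\alpha_i f_i(\alpha)=0$ for all $i\in[n]$; in particular $\alpha$ is a zero of each generator $x_if_i$, so $\alpha\in V_{\C}(\I[f])$. Since each $\alpha_i$ and each $f_i(\alpha)$ is a nonnegative real number, it has a (real) square root; choose $\beta_i\coloneqq\sqrt{\alpha_i}\in\R$ for $i\in[n]$ and $\beta_{n+i}\coloneqq\sqrt{f_i(\alpha)}\in\R$ for $i\in[n]$, taking the nonnegative root. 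Then $(\alpha,\beta)$ satisfies $z_i^2-x_i=0$, $z_{n+i}^2-f_i=0$ and $x_if_i=0$, so $(\alpha,\beta)\in V_{\C}(\I[f,z])$, and since all coordinates are real, $(\alpha,\beta)\in V_{\R}(\I[f,z])$. Applying $\pi_n$ gives $\alpha\in\pi_n\big(V_{\R}(\I[f,z])\big)$.

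Conversely, I would show $\pi_n\big(V_{\R}(\I[f,z])\big)\subseteq\Sol(f)$. Let $\alpha=\pi_n(\alpha,\beta)$ with $(\alpha,\beta)\in V_{\R}(\I[f,z])\subseteq\R^{3n}$. By \Cref{rmk:z2}, $\alpha\in V_{\C}(\I[f])$ and $\beta$ lies in the product set \eqref{eq:sqrt}; in particular $\beta_i^2=\alpha_i$ and $\beta_{n+i}^2=f_i(\alpha)$. Because $(\alpha,\beta)$ is a \emph{real} point, $\beta_i\in\R$, hence $\alpha_i=\beta_i^2\ge 0$; similarly $\beta_{n+i}\in\R$ forces $f_i(\alpha)=\beta_{n+i}^2\ge 0$. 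Moreover $\alpha\in V_{\C}(\I[f])$ gives $\alpha_if_i(\alpha)=0$ for every $i$, hence $x_1f_1(\alpha)+\dots+x_nf_n(\alpha)=0$. Thus $\alpha$ satisfies $x\ge 0$, $f(x)\ge 0$ and $\langle x\mid f(x)\rangle=0$, i.e. $\alpha\in\Sol(f)$. Combining the two inclusions yields \eqref{eq:Sol_slack}.

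The routine bookkeeping here is genuinely routine; the only point that requires a moment's care — and the step I would flag as the crux — is the direction of the real/nonnegative correspondence: one must use \emph{both} that a real point of $V_{\R}(\I[f,z])$ forces the slack-squared relations to express $\alpha_i$ and $f_i(\alpha)$ as squares of \emph{real} numbers (hence nonnegative), and conversely that a solution of $\CP(f)$ supplies nonnegative reals whose square roots can be chosen real. This is exactly where the "squared" slack variables (rather than ordinary ones) and the restriction to $V_{\R}$ rather than $V_{\C}$ do the work, and it is worth making explicit that replacing $V_{\R}$ by $V_{\C}$ would break the equivalence, since over $\C$ every element has a square root and the sign constraints would be lost.
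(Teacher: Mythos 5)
Your proof is correct and follows essentially the same route as the paper's: for $\Sol(f)\subseteq\pi_n\big(V_{\R}(\I[f,z])\big)$ one picks real (nonnegative) square roots of $\alpha_i$ and $f_i(\alpha)$ to build a real point of $V_{\R}(\I[f,z])$, and for the reverse inclusion one reads off $\alpha_i=\hat z_i^2\ge 0$, $f_i(\alpha)=\hat z_{n+i}^2\ge 0$ and $\alpha_i f_i(\alpha)=0$ directly from the generators of $\I[f,z]$ at a real point. The only cosmetic slip is writing $x_1f_1(\alpha)+\dots+x_nf_n(\alpha)$ where you mean $\alpha_1f_1(\alpha)+\dots+\alpha_nf_n(\alpha)$; the argument itself is sound.
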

\begin{proof}Let $\alpha=(\alpha_1,\dots,\alpha_{n})\in\R^n$ be a point in $\Sol(f)$. Clearly,  the coordinates of $\alpha$ satisfy $\alpha_i f_i(\alpha)=0$, for all $i\in[n]$. Since $\alpha_i$ and $f_i(\alpha)$ are non-negative, we choose $\hat{z}_i \in \sqrt{\alpha_i}$ and $\hat{z}_{n+i} \in \sqrt{f_i(\alpha)}$ for $i\in[n]$. From \eqref{eq:Ifz}, $(\alpha,\hat{z})$ is a point in the real algebraic set of $\I[f,z]$. Thus, $\pi_n(\alpha,\hat{z})=\alpha$.

To prove the inverse inclusion, we suppose that $\alpha\in \pi_n\big(V_{\R}(\I[f,z])\big)$. There exists $\hat{z}\in\R^{2n}$ such that $(\alpha,\hat{z})\in V_{\R}(\I[f,z])$. By the definition of $\I[f,z]$, one has  $\alpha_i f_i(\alpha)=0$, $\alpha_i=\hat{z}_i^2\geq 0$, and $f_i(\alpha)=\hat{z}_{n+i}^2\geq 0$, for all $i\in[n]$. Hence, $\alpha$ is a element of $\Sol(f)$.

The equality \eqref{eq:Sol_slack} is proved. \qed
\end{proof}

\begin{remark}
From \Cref{prop:Sol_slack}, the problem $\CP(f)$ is solvable if and only if the polynomials defining $\I[f,z]$ have common real roots. In some computer algebra systems, we can check whether a system of polynomials has real roots. In {\sc Maple}, for example, we can use $\mathtt{HasRealRoots}$ in the $\mathtt{RootFinding}$ package to perform such tasks for polynomials with rational coefficients. 
\end{remark}

\begin{example}\label{ex:CP1} Consider a linear complementarity problem $\CP(f)$ in two variables, where
    $f(x_1,x_2)=(x_2-1,x_1-1)$. We use four slack variables $(z_1,\dots,z_4)$ and consider the following system of polynomials in $\R[x_1,x_2,z_1,z_2,z_3,z_4]$:
\[\{x_1x_2-x_1, \ x_1x_2-x_2, z_1^2-x_1,z_2^2-x_2,z_3^2-x_2+1,z_4^2-x_1+1  \}.\]
This system has real roots, for example, $(1,1,1,1,0,0)$. We conclude that the complementarity problem has a solution.
\end{example}

\begin{remark}\label{rmk:slack}
The squared slack variables technique has been demonstrated to be an effective method for converting polynomial inequalities into equations. In particular, every basic closed semi-algebraic set $S$ in $\R^n$ given by the inequality constraints $\{g_1(x)\geq 0,\dots, g_m(x)\geq 0\}$ is the image of the following real algebraic set under projection $\pi_n$:
\[\{(x,z)\in\R^{n+m} : z_1^2-g_1(x)=0,\dots, z_m^2-g_m(x)=0 \}.\]
Consequently, findings on real algebraic sets, e.g., compactness or computing dimension \cite{pham2018compactness,lairez2021computing}, can be extended to closed semi-algebraic sets. A limitation of this approach is that the number of variables in the resulting system is $n+m$. However, the degrees of the new variables are only two. 
\end{remark}

\section{Existence of Univariate Representations for $\D_0$ Problems}\label{sec:CP}
 In this section, we focus on the case where $\I[f]$ is zero-dimensional, i.e. the complex variety $V_{\C}(\I[f])$ has finitely many points. It should be noted that this condition can be checked by using computer algebra systems.

 \begin{definition} If the complex variety $V_{\C}(\I[f])$ has finitely many points, we say that the polynomial complementarity problem given by $f$ is a $\D_0$ problem, or $f$ is $\D_0$ for simplicity.
 \end{definition}

Let $\R_d[x]$ be the set of all polynomials with real coefficients of degree at most $d$. 
Recall that the dimension of $\R_d[x]$, denoted by $\rho$,
is finite and the linear space $\R_d[x]^n$ is isomorphic
to $\R^{n\times \rho}$.
For every $f=(f_1,\dots,f_n)$ in $\R_d[x]^n$, $f_i$ can be written (uniquely) with coefficients $a_i \in\R,\ Q_i \in \R^{\rho-1}$ as follows:
$f_i(x)=a_i+ Q_i^T \vecx(x)$, 
where 
\begin{equation}\label{eq:X}
\vecx(x)=(x_1,\dots,x_n,x_1^2,x_1x_2,\dots,x_n^2,\dots,x_n^d)^T.
\end{equation}
Then, $f$ is written concisely $f(x)=a+ Q^T \vecx(x)$, where $a=(a_1,\dots,a_n)$ and $Q=(Q_1,\dots,Q_n)$.

\subsection{Genericity of $\D_0$ Problems} 
A subset $\mathcal{O} \subset \R^m$ is said to be \textit{generic} on $\R^m$ 
 if its complement $\R^m\setminus \mathcal{O}$ has Lebesgue measure zero. 
Recall from \cite[Proposition 1.5]{pham2016genericity} that if  $\mathcal{O} \subset \R^m$ is an open and dense semi-algebraic subset, then $\mathcal{O}$ is generic in  $\R^m$.

\hieu{This subsection aims at proving that the property of being $\D_0$ is generic, i.e. there exists a generic subset $\mathcal{O}$ in the parameter space $\R^{n\times \rho}$ such that $a+ Q^T \vecx(x)$ is $\D_0$ for every $(a,Q)\in \mathcal{O}$.}
 
\hieu{The following theorem (\Cref{thm:Sard_para}) is a version of Sard's theorem with real parameters.
Other versions can be found, for example, in  \cite[Theorem 1.10]{pham2016genericity} for the semi-algebraic setting (with real parameters), or in \cite[Proposition B.3]{safey2017} for the algebraic setting (with complex parameters). The claim is considered folklore; however, a suitable reference for it has yet to be found. The proof is postponed to the appendix.
The notions of real/complex manifolds and regular values can be found in monographs, for example, \cite{lee2003smooth,lee2024complex}.
}
\hieu{
\begin{theorem}\label{thm:Sard_para}
Let $V\subset \C^n$ be a smooth manifold, and $\Phi:\R^m\times V \to \C^n$ be a polynomial mapping. Assume that $V$ is the difference of two algebraic sets and $0\in\C^n$
is a regular value of $\Phi $. Then, there exists an open and dense semi-algebraic set $\mathcal{O}$ in $\R^m$ such that $0$ is a regular value of the mapping 
\[\Phi_{c}: V \to \C^{n}, \ \Phi_{c}(x)=\Phi(c,x),\] 
for any $c \in \mathcal{O}$.
\end{theorem}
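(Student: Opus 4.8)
The plan is to deduce the statement from the classical parametric transversality theorem (the ``parametric Sard theorem'') in the smooth category, applied to a suitable real reformulation, and then upgrade the resulting full-measure set of good parameters to an open and dense semi-algebraic set by a separate semi-algebraic argument. First I would pass from the complex picture to the real one: identify $\C^n$ with $\R^{2n}$, write $\Phi = (\Phi^{\mathrm{re}},\Phi^{\mathrm{im}})$, and note that $V$, being the difference of two complex algebraic sets and a smooth manifold (in the stated hypothesis), is in particular a real smooth (semi-algebraic) manifold of some real dimension, and $\Phi:\R^m\times V\to\R^{2n}$ is a $C^\infty$ (indeed polynomial, hence semi-algebraic) map. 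The hypothesis that $0\in\C^n$ is a regular value of $\Phi$ translates to: $0\in\R^{2n}$ is a regular value of the real map $\Phi$, i.e. $D\Phi(c,x)$ is surjective onto $\R^{2n}$ at every $(c,x)$ with $\Phi(c,x)=0$.

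Next I would invoke the parametric transversality theorem: if $0$ is a regular value of $\Phi:\R^m\times V\to\R^{2n}$, then for almost every $c\in\R^m$ (Lebesgue-a.e.), $0$ is a regular value of $\Phi_c:V\to\R^{2n}$, $\Phi_c(x)=\Phi(c,x)$. The standard proof applies the implicit function theorem to $W:=\Phi^{-1}(0)$, which is a smooth submanifold of $\R^m\times V$ by regularity, considers the restriction $\pi:W\to\R^m$ of the projection, and shows via ordinary Sard's theorem that a regular value $c$ of $\pi$ is exactly a parameter for which $0$ is a regular value of $\Phi_c$. This gives a set $\mathcal{O}_0\subset\R^m$ whose complement has measure zero; translating back, for $c\in\mathcal{O}_0$ the complex differential $D\Phi_c(x)$ is surjective onto $\C^n$ whenever $\Phi_c(x)=0$, i.e. $0$ is a regular value of $\Phi_c:V\to\C^n$ in the sense required.

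The remaining—and in my view the genuinely delicate—step is to replace the merely full-measure set $\mathcal{O}_0$ by an \emph{open and dense semi-algebraic} set $\mathcal{O}$. Here I would argue as follows. Because $\Phi$ is polynomial and $V$ is semi-algebraic (a difference of algebraic sets), the ``bad set'' $\mathcal{B}:=\{c\in\R^m : 0 \text{ is not a regular value of }\Phi_c\}$ is semi-algebraic: it is the image under the projection $\R^m\times\R^{2n}\to\R^m$ of the semi-algebraic set $\{(c,x): x\in V,\ \Phi(c,x)=0,\ \operatorname{rank} D_x\Phi(c,x) < 2n\}$, and projections of semi-algebraic sets are semi-algebraic by the Tarski--Seidenberg theorem; its complement $\mathcal{O}:=\R^m\setminus\mathcal{B}$ is therefore semi-algebraic. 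A semi-algebraic set of Lebesgue measure zero has dimension $<m$, hence empty interior, so its complement $\mathcal{O}$ is dense; and since $\mathcal{B}\subseteq\mathcal{O}_0^c$ has measure zero, $\mathcal{O}$ has full measure and in particular is dense. Finally, openness of $\mathcal{O}$ follows because $\mathcal{O}$ is semi-algebraic of full dimension and one checks it is stable: if $0$ is a regular value of $\Phi_c$, a compactness/lower-semicontinuity-of-rank argument on the (here one must be careful about properness) zero fibre shows the same holds for nearby $c'$—alternatively, invoke that a dense semi-algebraic set contains a dense open semi-algebraic subset and, after shrinking, pass to the interior $\operatorname{int}(\mathcal{O})$, which is still dense because $\dim(\mathcal{O}\setminus\operatorname{int}\mathcal{O})<m$. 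I expect the openness/properness point to be where care is needed: without a properness assumption on $\Phi$ one cannot in general guarantee the bad set is closed, so the cleanest route is to define $\mathcal{O}$ as the (semi-algebraic) interior of the full-measure semi-algebraic good set and verify that this interior is still dense, which it is precisely because its semi-algebraic complement has dimension strictly less than $m$. The details of this last reduction, together with the verification that the dimension count is legitimate, are routine semi-algebraic geometry and are carried out in the appendix.
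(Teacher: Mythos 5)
Your proposal is correct and follows essentially the same route as the paper's appendix proof: both pass to the real picture, apply Sard's theorem to the projection $\pi$ restricted to the smooth submanifold $\Phi^{-1}(0)$, and then upgrade the full-measure set of regular parameter values to a semi-algebraic one. The Tarski--Seidenberg argument for semi-algebraicity of the bad set and the pass-to-the-interior fix for openness that you spell out are precisely the details the paper compresses into ``it is not hard to see that $\mathcal{C}$ is semi-algebraic'' and into its opening reduction to a full-measure semi-algebraic set.
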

}

\begin{proposition}\label{prop:genericity} 
There exists an open and dense semi-algebraic subset $\mathcal{O}$ in $\R^{n\times \rho}$ such that $f(x)=a+ Q^T \vecx(x)$ is $\D_0$ for every $(a,Q)\in \mathcal{O}$.
 \end{proposition}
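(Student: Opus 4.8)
The plan is to deduce \Cref{prop:genericity} from \Cref{thm:Sard_para} by choosing the right manifold $V$ and parametrized map $\Phi$, so that the conclusion "$0$ is a regular value of $\Phi_{(a,Q)}$" forces the fiber, i.e.\ (a dense open subset of) $V_{\C}(\I[f])$, to be zero-dimensional. The natural choice is to work not with the $n$ products $x_if_i$ directly on all of $\C^n$, but to decompose $\C^n$ according to which coordinates vanish. For a subset $S\subseteq[n]$, let $V_S$ be the set of $x\in\C^n$ with $x_i=0$ for $i\in S$ and $x_i\neq 0$ for $i\notin S$; this is the difference of two algebraic sets (a coordinate subspace minus the coordinate hyperplanes), and it is a smooth manifold of dimension $n-|S|$. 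On $V_S$ the equation $x_if_i=0$ is automatic for $i\in S$ and is equivalent to $f_i(x)=0$ for $i\notin S$. So I would define, for each $S$, the parametrized system $\Phi^S:\R^{n\times\rho}\times V_S\to\C^{\,n-|S|}$ sending $((a,Q),x)$ to $(f_i(x))_{i\notin S}$, and show that $0$ is a regular value of $\Phi^S$ as a map on the whole parameter-times-manifold space.

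The key verification is exactly this regularity of the "master" map $\Phi^S$ at $0$: I must check that at every point $((a,Q),x)$ with $x\in V_S$ and $f_i(x)=0$ for all $i\notin S$, the differential of $\Phi^S$ is surjective onto $\C^{n-|S|}$. This is where the parameters do the work: the dependence of $f_i=a_i+Q_i^T\vecx(x)$ on the parameter $a_i$ is through an independent free constant for each $i$, so $\partial f_i/\partial a_j=\delta_{ij}$, and the $(n-|S|)\times(n-|S|)$ block of $\partial\Phi^S/\partial a$ indexed by $i,j\notin S$ is the identity matrix. Hence the full differential is surjective at every point of the zero set, so $0$ is a regular value — in fact unconditionally, with no hypothesis on $(a,Q)$. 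Then \Cref{thm:Sard_para} applies and yields, for each $S$, an open dense semi-algebraic set $\mathcal{O}_S\subseteq\R^{n\times\rho}$ such that for $(a,Q)\in\mathcal{O}_S$, $0$ is a regular value of $\Phi^S_{(a,Q)}:V_S\to\C^{n-|S|}$. By the regular value theorem (constant-rank / submersion theorem over $\C$), the fiber $(\Phi^S_{(a,Q)})^{-1}(0)$ is a smooth complex manifold of dimension $(n-|S|)-(n-|S|)=0$; a zero-dimensional complex manifold that is also the difference of algebraic sets (hence constructible) is finite.

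It remains to assemble this into the statement. Set $\mathcal{O}=\bigcap_{S\subseteq[n]}\mathcal{O}_S$, a finite intersection of open dense semi-algebraic sets, hence open, dense, and semi-algebraic in $\R^{n\times\rho}$; by the quoted \cite[Proposition 1.5]{pham2016genericity} it is generic. For $(a,Q)\in\mathcal{O}$ and $f=a+Q^T\vecx(\cdot)$, decompose $V_{\C}(\I[f])=\bigsqcup_{S\subseteq[n]} \big(V_{\C}(\I[f])\cap V_S\big)$. By \Cref{rmk:z2}-style reasoning (or directly: $x\in V_{\C}(\I[f])$ iff $x_if_i(x)=0$ for all $i$), the piece in $V_S$ is exactly $\{x\in V_S: f_i(x)=0,\ i\notin S\}=(\Phi^S_{(a,Q)})^{-1}(0)$, which we just showed is finite. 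A finite union of finite sets is finite, so $V_{\C}(\I[f])$ is finite, i.e.\ $f$ is $\D_0$. This proves the proposition.

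The main obstacle I anticipate is bookkeeping rather than conceptual: making sure that $V_S$ genuinely fits the hypotheses of \Cref{thm:Sard_para} (it is smooth, and it is the set-difference of the coordinate subspace by a coordinate-hyperplane union, hence a difference of two algebraic sets), and that a zero-dimensional smooth constructible set is finite — the latter because an infinite constructible set in $\C^n$ has positive dimension, equivalently contains a curve. One subtle point worth stating carefully is that \Cref{thm:Sard_para} is applied with the target dimension equal to $n-|S|$, matching the domain dimension of $V_S$, which is precisely what makes the regular fibers $0$-dimensional; had we instead used all $n$ products $x_if_i$ on $\C^n$ we would not get a square system on the strata and the dimension count would fail. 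Everything else — openness/density being preserved under finite intersection, $\delta_{ij}$ computations for the parameter derivatives — is routine.
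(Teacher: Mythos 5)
Your proof is correct and follows essentially the same route as the paper: stratify $\C^n$ according to which coordinates vanish, apply the parametric Sard theorem (\Cref{thm:Sard_para}) on each stratum, using the free constants $a_i$ to make the differential of the master map surjective, and conclude that each regular fiber is a zero-dimensional (hence finite) constructible set. The only differences are cosmetic — you use the equations $f_i=0$ on each stratum instead of $x_if_i=0$, which is equivalent there since those coordinates are nonzero — and your combination of the strata via the intersection $\bigcap_{S}\mathcal{O}_S$ is in fact the correct one (the paper's text writes a union of the sets $\mathcal{O}_\ell$, which appears to be a slip, since finiteness of $V_{\C}(\I[f])$ needs regularity on every stratum simultaneously).
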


\begin{proof}
Let $\ell$ be a subset of $[n]$ and $\mathcal{V}_{\ell}$ be the set of all $\alpha\in\C^n$ such that $\alpha_i\neq 0$ if $i\in \ell$ and $\alpha_i = 0$ if $i\in\bar\ell=[n]\setminus \ell$. Clearly, $\mathcal{V}_{\emptyset} = \{0\}$ and $\C^n$ is the disjoint union of $\mathcal{V}_{\ell}$ for $\ell \subseteq [n]$.

Let $\ell\neq \emptyset$ be given. Without loss of generality, we assume that $\ell=\{1,\dots,l\}$. For convenience, we write $x_{\ell}=(x_1,\dots,x_{l})$ and $0=(0_{\ell},0_{\bar\ell})$, where  $0_{\bar\ell}$ is the zero $(n-l)$-vector.
Then, the set $\mathcal{V}_{\ell}$ can be considered as the difference of two algebraic sets:
\[\mathcal{V}_{\ell}=\C^l\times \{0_{\bar\ell}\} \setminus \{(x_{\ell},0_{\bar\ell}): x_1 \cdots x_l = 0\}.\]

We consider the polynomial mapping 
\[\Phi_{\ell}:\R^{n}\times \R^{n \times (\rho-1)}\times \C^l\times \{0_{\bar\ell}\} \to \C^{ l }\]
defined by
\[\Phi_{\ell}(a,Q,x_{\ell},0_{\bar\ell}) = \Big(a_1x_1+x_1Q_1^T \vecx(x_{\ell},0_{\bar\ell}), \ \dots, \ a_lx_l+x_lQ_l^T \vecx(x_{\ell},0_{\bar\ell})\Big),\]
where $Q \in \R^{(\rho-1)\times n}$ and $\vecx(x)$ is given in \eqref{eq:X} with $x=(x_{\ell},0_{\bar\ell})$.
One sees that the restriction 
\[\Phi_{\ell}:\R^{n}\times \R^{n \times (\rho-1)}\times \mathcal{V}_{\ell} \to \C^{ l }\]
is differentiable. 
Obviously, its Jacobian matrix with respect to $(a,Q,x_{\ell})$, denoted by $J_{\ell}$, has size $l\times(n \rho +l)$. Moreover, $J_{\ell}$ has a has the diagonal submatrix $\partial_a \Phi_\ell$, whose entries are the components of $x_\ell$. Since all coordinates of $x_{\ell}$ are non-zero, we conclude that  $J_{\ell}$ 
 is full rank. It follows that the origin $0_{\ell}$ in $\C^{ l }$ is a regular value of $\Phi_{\ell}$. By Theorem \ref{thm:Sard_para}, there exists a generic set $\mathcal{O}_{\ell}$ in $\R^{n\times \rho}$ such that for every $(a,Q)$ in $\mathcal{O}_{\ell}$, $0_{\ell}$ is a regular value of 
 $\Phi_{\ell,a,Q}: \mathcal{V}_{\ell} \to \C^{ l }$ 
 defined by 
 $\Phi_{\ell,a,Q}(x_{\ell},0_{\bar\ell})=\Phi_{\ell}(a,Q,x_{\ell},0_{\bar\ell})$.
It is clear that 
$$\Phi_{\ell,a,Q}^{-1}(0_{\ell})=\Big\{(x_{\ell},0_{\bar\ell})\in \mathcal{V}_{\ell}: a_ix_i+ x_iQ_i^T \vecx(x_{\ell},0_{\bar\ell})  = 0, i\in\ell \Big\},$$ 
and according to the Regular Level Set Theorem \cite[Chapter 2]{lee2024complex}, this level set is a complex sub-manifold whose dimension is $0$ or it is empty. Therefore, this set has finitely many point. 

Finally, we observe that 
\[V_{\C}(\I[f]) \subseteq \big\{0\big\} \cup \Big(\bigcup_{\ell\subseteq [n], \ell \neq \emptyset} \Phi_{\ell,a,Q}^{-1}(0_{\ell})\Big),\]
and that the finite union $\mathcal{O}:= \bigcup_{\ell\subseteq [n], \ell \neq \emptyset} \mathcal{O}_{\ell}$ is generic in $\R^{n\times \rho}$. Hence, for every $(a,Q)$ in $\mathcal{O}$, $V_{\C}(\I[f])$ has finitely many points.
\qed
\end{proof}

\subsection{A Sharp Bound on the Cardinality of $V_{\C}(\I[f,z])$}

It should be highlighted that if we use $m=2n$ squared slack variables as in \Cref{rmk:slack}, the cardinality of the complex algebraic set can increase \hieu{up to $2^{2n}$ times the original}. However, in the setting of the complementarity problem, the \hieu{last number is reduced from $2^{2n}$ to $2^{n}$}, as shown in the following proposition. Recall that the degree of the polynomial mapping $f$ is the following number:
$$d:=\max\{\deg f_i:i\in [n]\}.$$

\begin{proposition}\label{prop:card} Let $f$ be $\D_0$. Then, the following inequalities hold:
\begin{equation}\label{eq:cardV}
\# V_{\C}(\I[f,z]) \ \leq  \ 2^n \times \#  V_{\C}(\I[f]) \ \leq \ 2^n(d+1)^n.
\end{equation}
\end{proposition}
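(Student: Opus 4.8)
The plan is to bound $\# V_{\C}(\I[f,z])$ in two stages, corresponding to the two inequalities in \eqref{eq:cardV}.

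For the first inequality, I would use the description of $V_{\C}(\I[f,z])$ given in \Cref{rmk:z2}: a point $(\alpha,\beta)$ lies in $V_{\C}(\I[f,z])$ if and only if $\alpha\in V_{\C}(\I[f])$ and $\beta$ lies in the $2n$-fold Cartesian product \eqref{eq:sqrt}. Naively, for each fixed $\alpha$ this product has at most $2^{2n}$ elements, giving the weaker bound $2^{2n}\cdot\#V_{\C}(\I[f])$. The key observation that improves $2^{2n}$ to $2^n$ is the complementarity structure: for every $\alpha\in V_{\C}(\I[f])$ and every $i\in[n]$ we have $\alpha_i f_i(\alpha)=0$, so at least one of $\alpha_i$, $f_i(\alpha)$ is zero. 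Whenever a complex number $c$ equals $0$, the set $\sqrt{c}$ is the singleton $\{0\}$ rather than a two-element set. Hence in the product $\sqrt{\alpha_i}\times\sqrt{f_i(\alpha)}$ at least one of the two factors is a singleton, so this product has at most $2$ elements (not $4$). Taking the product over $i\in[n]$, the fiber $\pi_n^{-1}(\alpha)\cap V_{\C}(\I[f,z])$ has at most $2^n$ points. Summing over the finitely many $\alpha\in V_{\C}(\I[f])$ yields $\#V_{\C}(\I[f,z])\le 2^n\cdot\#V_{\C}(\I[f])$.

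For the second inequality, I would apply the Bézout bound to the generating system $\{x_1 f_1,\dots,x_n f_n\}$ of $\I[f]$: since $\deg(x_i f_i)\le 1+\deg f_i\le 1+d$ for each $i$, and since $\I[f]$ is zero-dimensional (as $f$ is $\D_0$), the number of isolated complex solutions is at most $\prod_{i=1}^n\deg(x_i f_i)\le (d+1)^n$. Thus $\#V_{\C}(\I[f])\le (d+1)^n$, and multiplying by $2^n$ gives the final bound $2^n(d+1)^n$. One should be a little careful here: the crude Bézout theorem counts points in projective space with multiplicity and presumes a zero-dimensional variety, so I would invoke the affine Bézout inequality (number of isolated points of an affine variety defined by polynomials of given degrees is bounded by the product of the degrees), which holds unconditionally for the count of isolated points and in particular applies since $V_{\C}(\I[f])$ is finite.

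The main obstacle, and the crux of the argument, is the fiber-size bound $2^n$ rather than $2^{2n}$; everything hinges on recognizing that the complementarity equations $x_i f_i = 0$ force at least one square root in each pair to collapse to a single value. The Bézout step is standard once zero-dimensionality is in hand. I would also note in passing that \Cref{rmk:z2} already records the equivalence ``$V_{\C}(\I[f])$ finite $\iff$ $V_{\C}(\I[f,z])$ finite,'' so the hypothesis that $f$ is $\D_0$ legitimately makes both cardinalities finite and the inequalities meaningful.
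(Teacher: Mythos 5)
Your proposal is correct and follows essentially the same route as the paper's proof: the complementarity relation $\alpha_i f_i(\alpha)=0$ forces at least one factor in each pair $\sqrt{\alpha_i}\times\sqrt{f_i(\alpha)}$ to be the singleton $\{0\}$, giving at most $2^n$ points per fiber, and the second inequality follows from Bézout applied to the degree-$(d+1)$ system $\{x_1f_1,\dots,x_nf_n\}$. Your added care about using the affine Bézout bound for isolated points is a reasonable refinement but does not change the argument.
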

\begin{proof} We prove that for a given solution $\alpha$ the cardinality of the set  \eqref{eq:sqrt} does not exceed $2^n$. Indeed, for each pair $\{\alpha_i, f_i(\alpha)\}$, there is at least one entry that is zero, so the cardinality of $\sqrt{\alpha_i} \times\sqrt{f_i(\alpha)}$ is at most two. Hence, the cardinality of the set \eqref{eq:sqrt} does not exceed $2^n$. This implies that \[\# V_{\C}(\I[f,z]) \ \leq \ 2^n \times \#  V_{\C}(\I[f]).\]

Applying Bézout's theorem  \cite[Chapter 8, \S 7]{cox2013} to the system defining $\I[f]$, one has $\# V_{\C}(\I[f])\leq (d+1)^n$. Therefore, the second inequality in \eqref{eq:cardV} holds.
    \qed
\end{proof}

We are also interested in the maximum number of solutions of $\D_0$ problems with fixed degree $d$ and number of variables $n$. A sharp upper bound on this number is given in the following remark.

\begin{remark}\label{rmk:degTCP2} Let $d$ be fixed. Then the maximum number of solutions of $\CP(f)$, where $f$ is of degree $d$ and is $\D_0$, does not exceed $(d+1)^{n}$. Indeed, it is clear that $\#\Sol(f)\leq \# V_{\C}(\I[f])\leq (d+1)^{n}$, where the last number is the Bézout bound of $V_{\C}(\I[f])$. The maximum is reached, i.e., there exists $p\in\R[x]^n$ of degree $d$ and being $\D_0$ such that $\#\Sol(p)=(d+1)^{n}$. We consider the polynomial map $p$ given by
  \[  \hieu{p_i(x)=(-1)^{d}\prod_{j=1}^{d}(x_i-j), \ i \in [n]}.\]
Clearly, $x_ip_i$ has $d+1$ non-negative (real) roots, listed by $\{0,1,\dots,d\}$. It is not difficult to check that
    \begin{center}
        $V_{\C}(\I[p])=V_{\R}(\I[p]) = \big\{(a_1,\dots,a_n)\in\R^n: a_i\in \{0,1,\dots,d\}\big\}$,
    \end{center}
    and that the solution set $\Sol(p)$ coincides with $V_{\R}(\I[p])$. Hence, the number of points in $\Sol(p)$ is $(d+1)^{n}$.
\end{remark}

\begin{remark}\label{rmk:takeda} It should be noted that the number of variables is generally the most important factor in the complexity of symbolic computation on polynomials. We remark here about a technique for reducing the number of slack variables. It is logical to replace $x_i$, which is required to be non-negative, by $y_i^2$ in $f(x)$, $i\in [n]$. Then, the system \eqref{eq:fxzero} becomes the following system:
\[f(y^2)\geq 0,\  y_1f_1(y^2)=0, \ \dots, \ y_nf_n(y^2)=0,\]
where $y^2:=(y_1^2,\dots,y_n^2)$. We can add $n$ squared slack variables to the above system (regarding the condition $f(y^2)\geq 0$), then we get a new system of $2n$ variables. However, the degree of this system could be $2d+1$, then the cardinality of the complex variety can reach $2^n (2d+1)^{n}$. This number is larger than the bound $2^n(d+1)^{n}$ given in \eqref{eq:cardV}.
\end{remark}
 
\subsection{Univariate Representations of the Solutions}

\begin{theorem}\label{thm:solCP} Let $f$ be $\D_0$. Then, there exist univariate polynomials $v_1,\dots,v_n$, $ w$ in $\R[t]$, with $\deg v_i<\deg w$, $i\in[n]$, and $\deg w=\#V_{\C}(\I[f,z])$,  such that
\begin{equation}\label{eq:univrep}
\Sol(f)=\big\{\big(v_1(t),\dots,v_n(t)\big): w(t)=0, t\in\R\big\}.
\end{equation}
\end{theorem}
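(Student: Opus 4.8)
The plan is to pass through the auxiliary algebraic set $V_{\C}(\I[f,z])$ and its ideal $\I[f,z]$, and to put that ideal into shape position after two standard reductions. First I would recall from \Cref{rmk:z2} that, since $f$ is $\D_0$, the complex variety $V_{\C}(\I[f,z])$ is finite, so $\I[f,z]$ is a zero-dimensional ideal in $\R[x,z]$. To apply the Shape Lemma (\Cref{lm:shape}) I need two things: a radical ideal, and distinct first coordinates of the points of the variety. For radicality I would replace $\I[f,z]$ by $\rad(\I[f,z])$; by \Cref{lm:radC} this does not change the real or complex varieties, hence by \Cref{prop:Sol_slack} it does not change $\Sol(f)=\pi_n(V_{\R}(\I[f,z]))$. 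For the separating-coordinate condition I would invoke the classical fact that a generic linear change of the first coordinate separates the finitely many points of the variety; concretely, pick a new variable $t$ as a generic $\R$-linear combination of $x_1,\dots,x_n,z_1,\dots,z_{2n}$ (or adjoin $t$ together with the relation expressing it), so that all points of $V_{\C}$ acquire pairwise distinct $t$-coordinates. Relabeling so that this coordinate is the smallest in a lexicographic ordering, \Cref{lm:shape} then yields a reduced Gröbner basis of the form $[\,w(t),\,x_1-\tilde v_1(t),\dots,x_n-\tilde v_n(t),\,z_1-(\cdots),\dots\,]$ with $w,\tilde v_i\in\R[t]$, $\deg \tilde v_i<\deg w$, and $\deg w=\#V_{\C}(\rad(\I[f,z]))=\#V_{\C}(\I[f,z])$, the last equality because the varieties of an ideal and its radical coincide.

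Next I would read off the parametrization of the solution set. The shape-position Gröbner basis shows that $V_{\C}(\rad(\I[f,z])) = \{(\,\tilde v_1(\tau),\dots,\tilde v_n(\tau),\ast,\dots,\ast\,): w(\tau)=0\}$, i.e. every point of the variety is determined by a root $\tau$ of $w$, with the $x$-block given by $\tilde v(\tau)$. Intersecting with $\R^{n+2n}$ and projecting by $\pi_n$ gives
\[
\Sol(f)=\pi_n\big(V_{\R}(\rad(\I[f,z]))\big)=\big\{\big(\tilde v_1(t),\dots,\tilde v_n(t)\big): w(t)=0,\ t\in\R\big\},
\]
where the restriction $t\in\R$ is exactly what selects the real points (the $\tilde v_i$ and $w$ having real coefficients, a real root $t$ of $w$ produces a real $x$-vector, and the $z$-block is then automatically real because the remaining shape equations express each $z_j$ as a real polynomial in $t$). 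Setting $v_i\coloneqq \tilde v_i$ finishes the identity \eqref{eq:univrep}, and the degree bounds $\deg v_i<\deg w$ and $\deg w=\#V_{\C}(\I[f,z])$ come directly from the Shape Lemma.

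The main obstacle, and the step deserving the most care, is justifying the separating change of coordinates rigorously within the paper's framework: one must argue that a generic linear form in $x,z$ takes distinct values on the finitely many points of $V_{\C}(\I[f,z])$ (a Vandermonde / Zariski-open-condition argument), that the resulting ideal in the new variable $t$ is still zero-dimensional and radical, and that adjoining $t$ with its defining linear relation and then eliminating does not disturb the variety or the projection onto the original $x$-coordinates. A secondary subtlety is bookkeeping the variable order: the Shape Lemma as stated wants the distinguished variable to be the lexicographically smallest, so I would state the lexicographic order as $t<_{\lex}x_1<_{\lex}\cdots<_{\lex}x_n<_{\lex}z_1<_{\lex}\cdots<_{\lex}z_{2n}$ and note that the first component $w$ of the reduced Gröbner basis is univariate in $t$ while the $x_i-v_i(t)$ appear among the higher generators. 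Everything else — radicalization via \Cref{lm:radC}, finiteness via \Cref{rmk:z2} and the $\D_0$ hypothesis, and the projection identity via \Cref{prop:Sol_slack} — is then a direct assembly of results already available in the excerpt.
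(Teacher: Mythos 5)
Your proposal is correct and follows essentially the same route as the paper's proof: use Proposition 3.1 to reduce to the slack-variable variety $V_\R(\I[f,z])$, pass to the radical (Lemma 2.1), perform a real linear change of coordinates so that the finitely many complex points have distinct first coordinates, apply the Shape Lemma, and then project back onto the $x$-block while tracking reality via the real coefficients of the matrix and of the $v_i$. The only cosmetic differences are that the paper cites a concrete construction of the separating linear form (Rouillier, Lemma~2.1) rather than a genericity argument, realizes the change of coordinates as a full invertible matrix $H$ acting on $(x,z)$ so that the $v_i$ are recovered as the first $n$ components of $H(u(t))^T$ (and reduced modulo $w$ to enforce the degree bound) rather than read off directly from the shape basis as in your adjoin-$t$ formulation, and spells out the converse direction of the reality equivalence, namely that a real point forces the separating coordinate $\tau$ to be real via $u(\tau)^T = (H^{-1}H)u(\tau)^T\in\R^{3n}$, which your write-up leaves implicit.
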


\begin{proof} Since $V_{\C}(\I[f])$ is finite, according to \cite[Lemma 2.1]{rouillier1999}, there exists an invertible $3n \times 3n$-matrix $H$ with real entries such that, if we change the variables by $(x,z)=Hy$ in the following polynomials
\begin{equation}\label{eq:listfz}
    x_1f_1,\dots,x_nf_n, z_1^2-x_1,\dots, z_n^2-x_n, z_{n+1}^2-f_1,\dots, z_{2n}^2-f_n,
\end{equation}
we obtain respectively polynomials $h_1(y),\dots,h_{3n}(y)$, and introduce in $\R[y_1,\dots,y_{3n}]$ the ideal  
\[\I_h\coloneqq\left\langle h_1,\dots,h_{3n} \right\rangle,\]
then the points in $V_{\C}(\I_h )$ have distinct $y_1$-coordi\-nates. Thanks to \Cref{lm:radC}, we have
$V_{\C}(\rad(\I_h ))= V_{\C}(\I_h )$. 
In view of the assumption and \Cref{prop:card}, $\rad(\I_h )$ is zero-dimensional and radical.

Clearly, the ideal $\rad(\I_h )$ satisfies the condition of the Shape Lemma. So, there exist $3n$ polynomials in $\R[y_1]$, say $w$, $u_2,\dots,u_{3n}$, with  $\deg w=\#V_{\C}(\I[f,z])$ and
$\deg u_i<\deg w$, $i=2,\dots,3n$, and
such that
$[w,y_2-u_2,\dots,y_{3n}-u_{3n}]$ is the reduced Gr\"{o}bner basis of $\rad(\I_h )$ with respect to a lexicographic monomial ordering $y_1< \cdots < y_{3n}$.

We write $u(t):=(t,u_2(t),\dots,u_{3n}(t))$ for short. From the change of variables, one has
\begin{equation*}\label{eq:VCf}
V_{\C}(\I[f,z])=\big\{H(u(t))^T: w(t)=0\big\}.
\end{equation*}
Recall that the entries of $H$ are real, so that the ones of $H^{-1}$ are so as well. Suppose that $\tau$ is a root of $w(t)$. If $\tau$ is real, then all coordinates of $H(u(\tau))^T$ are real, hence this vector belongs to $V_{\R}(\I[f,z])$. Conversely, if $H(u(\tau))^T$ is in $\R^{3n}$ then
\[u(\tau)^T= \left(H^{-1}H\right) (u(\tau))^T\in\R^{3n}.\]
Hence, $\tau$ belongs to $\R$. We conclude that the real algebraic variety $\I[f,z]$ is given by
\begin{equation}\label{eq:VRf}
    V_{\R}(\I[f,z])=\big\{H(u(t))^T: w(t)=0, t\in\R\big\}.
\end{equation}

Let $v_1(t),\dots,v_n(t)$ be the first $n$ components of $ H(u(t))$.
Since the polynomials $u_2,\dots,u_{3n}$ have real coefficients, so do $v_1,\dots,v_n$. 
Moreover, if $\deg v_i\geq \deg w$ then we replace $v_i$ by the remainder of the division of $v_i$ by $w$, for $i\in[n]$.

The conclusion of the theorem follows from \eqref{eq:VRf} and \eqref{eq:Sol_slack}. \qed
\end{proof}

The system $[w,v_1,\dots,v_n] \subset \R[t]$ mentioned in \Cref{thm:solCP} is univariate and such a system is called a \textit{univariate representation} of the solutions 
of $\CP(f)$.

To illustrate Theorem \ref{thm:solCP} and its proof, we consider the following example.

\begin{example}\label{ex:CP2} We consider the complementarity problem given in \Cref{ex:CP1}. 
    From \eqref{eq:If}, the ideal $\I[f]$ is given by
$\I[f]= \left\langle x_1x_2-x_1, \ x_1x_2-x_2 \right\rangle$.
This ideal is zero-dimensional as its complex variety contains two distinct points,
$V_{\C}(\I[f])=\left\lbrace (0,0),(1,1)\right\rbrace$.
We use four slack variables $z=(z_1,\dots,z_4)$ and consider the following ideal in $\R[x,z]$:
\[\I[f,z]= \left\langle x_1x_2-x_1, \ x_1x_2-x_2, z_1^2-x_1,z_2^2-x_2,z_3^2-x_2+1,z_4^2-x_1+1  \right\rangle.\]
By changing variables $(x,z)=Hy$, where $H$ is a $6\times 6$-matrix as follows:
\[H:=\begin{bmatrix}
   1 & 2 & \cdots & 6 \\
   0 & 1 & \cdots & 0  \\
   \vdots & \vdots  & \ddots & \vdots \\
   0 & 0 & \cdots & 1
\end{bmatrix},\]
we obtain six polynomials $h_1,\dots,h_6$ in variables $y$. The ideal $\rad(\I_h) $ is in shape position with the pure lexicographic ordering $y_1<y_2<\cdots<y_6$. The Gr\"{o}bner basis of $\rad(\I_h) $ is $[w(y_1),y_2-u_2(y_1),\dots,y_6-u_6(y_1)]$. The first two coordinates of $H(t,u_2,\dots,u_6)$ are denoted by $v_1,v_2$. So we obtain $w$ and  $v_1,v_2$ as follows:

{\small $w(t)=t^{8}+4 t^{7}+78 t^{6}+392 t^{5}-5247 t^{4}-11228 t^{3}-5324 t^{2}-11616 t$}, 
{\small $v_1(t) =(-395708 t^{7} - 1758766 t^{6} - 31180792 t^{5} - 167371327 t^{4}+ 2037780580 t^{3}+ 5545360564 t^{2} + 2068565664 t + 5710973125)/5710973125$},
{\small \hieu{$v_2(t) = v_1(t)$}}.

Polynomial $w$ has eight roots $-8, -2, 0, 6,  \mathrm{i}, - \mathrm{i},11 \mathrm{i}, -11 \mathrm{i}$, where $\mathrm{i}$ is the imaginary unit. According to \eqref{eq:univrep}, the solution set is given by
\[\Sol(f)=\big\lbrace (v_1(t),v_2(t)):t\in\{-8, -2,0, 6\}\big\rbrace  = \{(1,1)\}.\]
\end{example}

\subsection{Univariate Representations of Least-norm Solutions and Sparse Solutions}
Once a univariate representation for the solutions of a $\D_0$ problem is obtained, we can provide a univariate representation for the set of least-norm solutions $\Solln(f)$. In particular, we have the following theorem:

\begin{theorem}\label{thm:sol_least} Let $f$ be $\D_0$ and $[w,v_1,\dots,v_n]$ be a univariate representation of $\Sol(f)$. Then, there exists $\varphi$ in $\R[t]$ such that
\begin{equation}\label{eq:univ_least}
\Solln(f)=\Big\{\big(v_1(t),\dots,v_n(t)\big): t\in\arg\min\{\varphi(t): w(t)=0, t\in\R\}\Big\}.
\end{equation}
\end{theorem}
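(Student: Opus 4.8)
The plan is to express the least-norm optimization problem over $\Sol(f)$ as an optimization problem over the finite set of real roots of $w$, by pulling back the (squared) norm through the univariate parametrization. First I would set $\varphi(t) \coloneqq v_1(t)^2 + \dots + v_n(t)^2 \in \R[t]$, which is a genuine univariate polynomial since each $v_i \in \R[t]$. Using the univariate representation \eqref{eq:univrep} from \Cref{thm:solCP}, every $\alpha \in \Sol(f)$ has the form $\alpha = (v_1(\tau),\dots,v_n(\tau))$ for some real root $\tau$ of $w$, and conversely; hence $\|\alpha\|^2 = \varphi(\tau)$ for such $\tau$. Recalling the remark after \eqref{eq:minnorm} that replacing $\|\cdot\|$ by $\|\cdot\|^2$ does not change the set of minimizers, the value $\min\{\|\alpha\| : \alpha \in \Sol(f)\}$ equals the square root of $\min\{\varphi(t) : w(t)=0,\ t\in\R\}$, and the latter minimum is attained because $\{t \in \R : w(t)=0\}$ is a nonempty finite set whenever $\Sol(f)\neq\emptyset$ (nonemptiness of $\Sol(f)$ may be assumed, as otherwise both sides of \eqref{eq:univ_least} are empty and the identity is trivial).

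Next I would prove the two inclusions in \eqref{eq:univ_least}. For ``$\subseteq$'': if $\alpha \in \Solln(f)$, pick $\tau \in \R$ with $w(\tau)=0$ and $\alpha = (v_1(\tau),\dots,v_n(\tau))$; then $\varphi(\tau) = \|\alpha\|^2$ is the minimum of $\|\cdot\|^2$ over $\Sol(f)$, which by the correspondence above equals $\min\{\varphi(t): w(t)=0,\ t\in\R\}$, so $\tau \in \arg\min\{\varphi(t): w(t)=0,\ t\in\R\}$ and $\alpha$ lies in the right-hand set. For ``$\supseteq$'': if $\tau \in \arg\min\{\varphi(t): w(t)=0,\ t\in\R\}$, then $\alpha \coloneqq (v_1(\tau),\dots,v_n(\tau)) \in \Sol(f)$ by \eqref{eq:univrep}, and $\|\alpha\|^2 = \varphi(\tau)$ is the minimal value of $\varphi$ over the real roots, hence the minimal value of $\|\cdot\|^2$ over $\Sol(f)$; thus $\alpha \in \Solln(f)$. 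This establishes \eqref{eq:univ_least} with the explicit choice $\varphi = \sum_{i=1}^n v_i^2$.

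I do not expect any serious obstacle here; the argument is essentially a translation of the minimization problem along the bijective-on-image parametrization $t \mapsto (v_1(t),\dots,v_n(t))$ restricted to the real zeros of $w$. The one point that needs a word of care is that the parametrization need not be injective on the real root set of $w$ (several roots may give the same solution), but this is harmless: it only means the $\arg\min$ on the right-hand side may contain more $t$-values than strictly necessary, while the resulting \emph{set} of points $(v_1(t),\dots,v_n(t))$ is still exactly $\Solln(f)$. A second minor point is the degenerate case $\Sol(f) = \emptyset$, which I would dispatch at the outset as noted above. Everything else is routine.
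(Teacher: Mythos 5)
Your proposal is correct and follows essentially the same route as the paper: take $\varphi=\sum_{i=1}^n v_i^2$ and translate the minimization of $\|\cdot\|^2$ over $\Sol(f)$ into minimization of $\varphi$ over the real roots of $w$. The only (cosmetic) difference is that the paper further replaces this polynomial by its remainder upon division by $w$ so that $\deg\varphi<\deg w$, which changes nothing on the zero set of $w$; your explicit handling of the empty case and of non-injectivity of the parametrization is a welcome addition to what the paper leaves as ``not difficult to prove.''
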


\begin{proof}
It is clear that the following real function 
\begin{equation}\label{eq:phi}
 \hat{\varphi}(t):=v_1^2(t)+\dots+v_n^2(t), \ t\in\R, 
\end{equation}
is a univariate polynomial with real coefficients. Denote by $\varphi$ the remainder of the division of $\hat{\varphi}$ by $w$. One has $\deg\varphi < \deg w$.
 Let $\gamma$ be the minimum of $\varphi(t)$ over the finite set $\{t\in\R, w(t)=0\}$. It is not difficult to prove that  $\hat{\alpha}$ is a least-norm solution of $\CP(f)$ if and only if there exists a real root $\tau$ of $w$ such that $\varphi(\tau)=\gamma$ and  $\hat{\alpha}=(v_1(\tau),\dots,v_n(\tau))$. Hence, \eqref{eq:univ_least} is proved.
 \qed
\end{proof}

The results concerning least-norm solutions can be extended to other norms. However, in this paper with the polynomial setting, we only consider the Euclidean norm $\|\cdot\|$.

Let $[w,v_1,\dots,v_n]$ be a univariate representation of $\Sol(f)$ and $\alpha$ be a solution of $\CP(f)$. Let $\ell$ be a subset of $[n]$. We denoted $w_{\ell}$ by the greatest common divisor defined as follows:
$$w_{\ell}:=\gcd(w,v_{i}, i\in \ell).$$

\begin{theorem}\label{thm:sol_sp} Let $f$ be $\D_0$ and $[w,v_1,\dots,v_n]$ be a univariate representation of $\Sol(f)$. Suppose that sparse solutions have $k$ zero coordinates. Then
\begin{equation}\label{eq:univ_sp}
\Solsp(f)=\bigcup_{\ell\subseteq [n], k= l }\Big\{\big(v_1(t),\dots,v_n(t)\big): w_{\ell}(t)=0, w_{\bar\ell}(t)\neq 0, t\in\R\Big\}.
\end{equation}
\end{theorem}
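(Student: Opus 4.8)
The plan is to establish \eqref{eq:univ_sp} by two inclusions, after reformulating the condition ``$w_\ell$ vanishes at a real root of $w$'' in terms of the supports of solutions. The key elementary fact is that, since $\R[t]$ is a principal ideal domain and $w_\ell=\gcd(w,v_i,i\in\ell)$, we have $w_\ell\mid w$ and $w_\ell\mid v_i$ for every $i\in\ell$. Hence: (i) if $\tau\in\R$ is a root of $w_\ell$, then $v_i(\tau)=0$ for all $i\in\ell$; and (ii) conversely, if $\tau$ is a real root of $w$ with $v_i(\tau)=0$ for all $i\in\ell$, then $t-\tau$ divides $w$ and each $v_i$ $(i\in\ell)$, hence divides their gcd $w_\ell$, so $w_\ell(\tau)=0$. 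Writing $\alpha^{(\tau)}:=(v_1(\tau),\dots,v_n(\tau))$ --- which, by \Cref{thm:solCP}, ranges exactly over $\Sol(f)$ as $\tau$ runs over the real roots of $w$ --- this says: for a real root $\tau$ of $w$ and $\ell\subseteq[n]$, one has $w_\ell(\tau)=0$ if and only if $\ell$ is contained in the set of zero coordinates of $\alpha^{(\tau)}$.

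Granting this, I would prove ``$\supseteq$'' first. Fix $\ell$ with $|\ell|=k$ and a real root $\tau$ of $w$ with $w_\ell(\tau)=0$ and $w_{\bar\ell}(\tau)\neq0$, and put $\alpha=\alpha^{(\tau)}\in\Sol(f)$. By the reformulation, $\alpha_i=0$ for all $i\in\ell$, so $\alpha$ has at least $k$ zero coordinates, i.e.\ $\|\alpha\|_0\le n-k$. The hypothesis ``sparse solutions have $k$ zero coordinates'' means precisely $\min\{\|\beta\|_0:\beta\in\Sol(f)\}=n-k$, so $\|\alpha\|_0=n-k$ and $\alpha\in\Solsp(f)$. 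For ``$\subseteq$'' (assuming $k<n$; the case $k=n$ is trivial, see below), let $\alpha\in\Solsp(f)$ and write $\alpha=\alpha^{(\tau)}$ for a real root $\tau$ of $w$; set $\ell^{\star}=\{i:\alpha_i=0\}$, of cardinality exactly $k$ since $\alpha$ is sparse. The reformulation gives $w_{\ell^{\star}}(\tau)=0$, and for each $i\in\bar{\ell^{\star}}$ we have $v_i(\tau)=\alpha_i\neq0$, so $w_{\bar{\ell^{\star}}}(\tau)\neq0$ because $w_{\bar{\ell^{\star}}}\mid v_i$. Hence $\alpha$ lies in the $\ell^{\star}$-summand of the right-hand side of \eqref{eq:univ_sp}, and $|\ell^{\star}|=k$, so $\alpha$ belongs to that right-hand side.

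The step I expect to need the most care is the bookkeeping around $k$: one must exploit that \emph{all} sparse solutions have the \emph{same} number of zero coordinates, so that ``at least $k$ zeros'' can be upgraded to ``exactly $k$ zeros, hence sparse'', and one should check that the side condition $w_{\bar\ell}(\tau)\neq0$ is consistent with $|\ell|=k$ rather than over-restrictive --- it merely forces the support of $\alpha^{(\tau)}$ to be exactly $\bar\ell$. It also remains to dispose of the degenerate case $k=n$, which occurs exactly when $0\in\Sol(f)$; then $\Solsp(f)=\{0\}$ and \eqref{eq:univ_sp} is immediate (with the single summand $\ell=[n]$ returning $\{0\}$).
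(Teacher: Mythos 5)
Your proof is correct and follows essentially the same line as the paper's: you reduce the statement to the elementary observation that, for a real root $\tau$ of $w$, $w_\ell(\tau)=0$ holds precisely when $v_i(\tau)=0$ for all $i\in\ell$ (by divisibility through the gcd in the PID $\R[t]$), and then translate sparsity into the statement that $k$ is the maximal number of vanishing coordinates over $\Sol(f)$. The paper's proof only spells out the $\subseteq$ inclusion and declares the converse ``similar''; you carry out the $\supseteq$ direction explicitly, correctly noting that a candidate $\alpha^{(\tau)}$ with at least $k$ zero coordinates must have exactly $k$ by the maximality defining sparse solutions, and you also handle the degenerate case $k=n$ --- so your write-up is slightly more complete, but the underlying argument is the same.
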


\begin{proof} Let $\alpha$ be a sparse solution. Suppose that $\ell$ is the index set of the zero coordinates of $\alpha$. So, there are some real roots $\tau$ of $w$ such that $\alpha=\big(v_1(\tau),\dots,v_n(\tau)\big)$ and $v_i(\tau)=0$ for $i\in\ell$, and the rest of the coordinates $v_i(\tau)\neq 0$ for $i\in\bar\ell$. Hence $\tau$  is a real root of $w_{\ell}$ and $\tau$ is not a real root of $w_{\bar\ell}$. Therefore, the inclusion $\subseteq$ of \eqref{eq:univ_sp} is proved. The converse
inclusion is similarly demonstrated. 
\qed
\end{proof}

\section{Algorithms Computing Univariate Representations}\label{sec:compRn}
Based on the proof of Theorem \ref{thm:solCP}, we first design an algorithm (\Cref{alg:Sol}) to compute a univariate representation of the solution set of a $\D_0$ problem.  We then briefly describe algorithms for computing univariate representations for least-norm and sparse solutions based on \Cref{alg:Sol}.

\subsection{Computing Univariate Representations of $\Sol(f)$}

\textit{Input and output}. The input of \Cref{alg:Sol} is a polynomial mapping $f$ being $\D_0$.
The output is a list of univariate polynomials $[w,v_1,\dots,v_n]\subset \R[t]$ such that $\Sol(f)$ can be represented as in \eqref{eq:SolAa}.

\begin{algorithm}
    \caption{Computing a univariate representation of $\Sol(f)$}\label{alg:Sol}
    \smallskip

    \textbf{Input:} $f\in\R[x]^n$ such that $f$ is $\D_0$
    \smallskip

    \textbf{Output:} $[w,v_1,\dots,v_n]\subset \R[t]$ - a univariate representation of $\Sol(f)$,
    \begin{equation}\label{eq:SolAa}
        \Sol(f)=\Big\{\big(v_1(t),\dots,v_n(t)\big): w(t)=0, t\in\R\Big\},
    \end{equation}
where $\deg w=\#V_{\C}(\I[f,z])$ and $\deg v_i<\deg w$, $i\in[n]$
    \begin{itemize}
        \item [\rm 1:] Set $s\coloneqq0$ and $\shape\coloneqq$ false
        \smallskip
        \item [\rm 2:] While $\shape=$ false do
        \smallskip
        \begin{itemize}
            \item [\rm 3:] Change variables $(x,z)=Hy$ in the polynomials in \eqref{eq:listfz}, respectively, obtain polynomials denoted by $h_i(y)$, $i\in[3n]$, where
            \begin{equation}\label{eq:H}
                H:=\begin{bmatrix}
                    1 & s & s^2 &\cdots & s^{3n-1} \\
                    0 & 1 & 0 & \cdots  & 0 \\
                    0& 0 & 1 & \cdots & 0\\
                    \vdots & \vdots & \vdots & \ddots & \vdots\\
                    0& 0 & 0 & \cdots & 1
                \end{bmatrix}^{-1} \in\R^{3n \times 3n}
            \end{equation}
            \item [\rm 4:] Set $\I_h \coloneqq\left\langle h_1,\dots,h_{3n} \right\rangle$ in $\R[y_1,\dots,y_{3n}]$ and compute the reduced Gr\"{o}bner basis $G$ of $\rad(\I_h )$ with respect to the pure lexicographic ordering $y_1< y_2< \cdots < y_{3n}$
            \smallskip
            \item [\rm 5:] If the basis $G$ has the following form \begin{equation}\label{eq:G}
                G=[ w,y_2-u_2,\dots,y_{3n}-u_{3n}],
            \end{equation}
where $ w,u_2,\dots,u_{3n}$ are in $\R[y_1]$, i.e. $\rad(\I_h) $
            is in shape position,
then set $\shape\coloneqq$ true, else
            set $s\coloneqq s+1$
            \smallskip
        \end{itemize}
        \item [\rm 6:]
Change the variable $y_1$ by $t$ and define by $(v_1,\dots,v_n)$ the first $n$ coordinates of $H(t,u_2,\dots,u_{3n})^T$ 
\smallskip
\item [\rm 7:] Return $[w,v_1,\dots,v_n]$
    \end{itemize}
\end{algorithm}

\noindent \textit{Description}. At the beginning, we set $s=0$ and $\shape=$ false. The variable $\shape$ is boolean.
The while loop in Line~2 will stop if $\shape$ is true. Within the while loop, in Line 3, with $s$ given, we define matrix $H$  depending on $s$ as in \eqref{eq:H} and change the variables $(x,z)=Hy$ in the polynomials in \eqref{eq:listfz} and obtain $h_i$, $i\in[3n]$.  In Line~4,
we set
\[\I_h \coloneqq\left\langle h_1,\dots,h_{3n} \right\rangle\subset \R[y_1,\dots,y_{3n}]\]
and compute the reduced Gr\"{o}bner basis $G$ of the radical of the ideal $\I_h $, that is $\rad(\I_h)$, with respect to the pure lexicographic order with $y_1< y_2< \cdots < y_{3n}$. In Line 5, we set the value of $\shape$ as true if 
$\rad(\I_h)$ is in shape position
otherwise, we increase the value $s\coloneqq s+1$.

When the while loop ends, the reduced Gr\"{o}bner basis $G$ has the form \eqref{eq:G}, then $\rad(\I_h)$ is in shape position
 with \[\rad\left( \I_h\right) =\left\langle w,y_2-u_2,\dots,y_{3n}-u_{3n}\right\rangle.\]

The correctness of \Cref{alg:Sol} is proved in the following proposition.

\begin{theorem}\label{prop:corectCP} Let $f$ be $\D_0$. On input $f$,
\Cref{alg:Sol} terminates and returns a univariate representation of the solution set $\Sol(f)$.
\end{theorem}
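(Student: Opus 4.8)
The plan is to verify the two things claimed: termination of the while loop, and correctness of the returned output. I would argue termination first. The matrix $H$ in \eqref{eq:H} implements the linear change of coordinates $y_1 = x_1 + s\,x_2 + \dots$ (after inversion, a triangular unipotent matrix), so the $y_1$-coordinate of a point $(\alpha,\beta)\in V_{\C}(\I[f,z])$ is a fixed affine-linear form in $(\alpha,\beta)$ evaluated at that point, with coefficients that are polynomials in $s$. Since $f$ is $\D_0$, Remark~\ref{rmk:z2} gives that $V_{\C}(\I[f,z])$ is finite; for each of the finitely many pairs of distinct points in this set, the set of $s$ for which the two points share the same $y_1$-coordinate is the zero set of a nonzero polynomial in $s$, hence finite. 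Taking the union over all pairs, only finitely many integer values of $s$ fail to separate the points. So for all sufficiently large integer $s$, the points of $V_{\C}(\I_h)=V_{\C}(\I[f,z])$ (the variety is just the image of $V_{\C}(\I[f,z])$ under an invertible linear map, hence has the same cardinality) have distinct $y_1$-coordinates. Combined with Lemma~\ref{lm:radC} (so $\rad(\I_h)$ has the same variety, is radical, and is zero-dimensional by Proposition~\ref{prop:card}), the hypotheses of the Shape Lemma (Lemma~\ref{lm:shape}) are met, so $\rad(\I_h)$ is in shape position and the reduced Gr\"obner basis $G$ has the form \eqref{eq:G}. Thus the test in Line~5 eventually succeeds and the loop terminates.

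Next I would address correctness, which is essentially a transcription of the proof of Theorem~\ref{thm:solCP}. When the loop exits, $G=[w,y_2-u_2,\dots,y_{3n}-u_{3n}]$ is the reduced Gr\"obner basis of $\rad(\I_h)$ with respect to the lex order $y_1<\dots<y_{3n}$, so $V_{\C}(\rad(\I_h)) = \{(t,u_2(t),\dots,u_{3n}(t)) : w(t)=0\}$ and $\deg w = \#V_{\C}(\rad(\I_h)) = \#V_{\C}(\I_h) = \#V_{\C}(\I[f,z])$, with $\deg u_i < \deg w$. Undoing the change of variables $(x,z)=Hy$, we get $V_{\C}(\I[f,z]) = \{H\,(t,u_2(t),\dots,u_{3n}(t))^T : w(t)=0\}$. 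Because $H$ and $H^{-1}$ are real, a root $\tau$ of $w$ yields a real point of $V_{\C}(\I[f,z])$ iff $\tau\in\R$ (the forward direction is immediate; the converse uses $ (t,u(t))^T = H^{-1}H(t,u(t))^T$, so realness of $H(t,u(\tau))^T$ forces $u(\tau)$, in particular its first coordinate $\tau$, to be real). Hence $V_{\R}(\I[f,z]) = \{H(t,u_2(t),\dots,u_{3n}(t))^T : w(t)=0,\ t\in\R\}$. Taking the first $n$ coordinates of $H(t,u_2,\dots,u_{3n})^T$ as $(v_1,\dots,v_n)$ and applying the projection $\pi_n$ together with Proposition~\ref{prop:Sol_slack}, we obtain $\Sol(f) = \{(v_1(t),\dots,v_n(t)) : w(t)=0,\ t\in\R\}$. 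Finally, reducing each $v_i$ modulo $w$ if necessary (as in the proof of Theorem~\ref{thm:solCP}) does not change the values $v_i(\tau)$ at roots $\tau$ of $w$ and ensures $\deg v_i < \deg w$; the $v_i$ have real coefficients since $H$ and the $u_i$ do. This is exactly the output of \eqref{eq:SolAa}.

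I expect the main obstacle to be the termination argument, specifically making precise that the separating property holds for \emph{integer} values of $s$ rather than merely for $s$ in a dense or cofinite subset of $\R$. This is fine because each obstruction set is the zero set of a single nonzero univariate polynomial in $s$, hence finite, and a finite union of finite sets omits all but finitely many integers --- but one should state clearly that the coefficients of the $y_1$-form depend polynomially on $s$ and that distinctness of two fixed points gives a nonzero polynomial, not the zero polynomial (which would be the case only if the two points were equal, contradicting distinctness). A secondary point worth a sentence is that $\#V_{\C}(\I_h)=\#V_{\C}(\I[f,z])$ because the change of variables is an invertible linear (affine) isomorphism of $\C^{3n}$, and that the radical operation is what licenses invoking the Shape Lemma, whose statement requires a radical zero-dimensional ideal; zero-dimensionality of $\rad(\I_h)$ is inherited from finiteness of $V_{\C}(\I[f,z])$ via Proposition~\ref{prop:card} and Remark~\ref{rmk:z2}. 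With these points in place, correctness follows verbatim from the proof of Theorem~\ref{thm:solCP}, so I would keep that part terse and reference it.
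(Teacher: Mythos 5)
Your proof is correct and follows essentially the same two-part structure as the paper's: argue termination via a separating linear form in $y_1$ with coefficients polynomial in $s$, then transcribe the argument of Theorem~\ref{thm:solCP} to establish correctness of the output. The only difference is that you re-derive eventual existence of a separating integer $s$ from first principles (finitely many bad $s$ per pair of distinct points in the finite variety), whereas the paper cites Rouillier's Lemma~2.1 to obtain the explicit iteration bound $(3n-1)\delta(\delta-1)/2+1$; both suffice for the stated claim.
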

\begin{proof} Denote by $\delta$ the cardinality of $V_{\C}(\I[f])$, according to \cite[Lemma 2.1]{rouillier1999},
there is a number $s$ in the set $\{0,\dots,(3n-1)\delta(\delta-1)/2\}$ such that the linear function
\[u(x,z)\coloneqq x_1+ \dots +s^{n-1}x_n+s^nz_1+\dots +s^{3n-1}z_{2n} \]
separates $V_{\C}(\I[f,z])$, i.e., $u(a)\neq
    u(b)$ for any distinct points $a,b$ in $V_{\C}(\I[f,z])$. Hence, the while loop ends within at most $(3n-1)\delta(\delta-1)/2+1$ iterations.

After the while loop in Line 2, the reduced Gr\"{o}bner basis of $\rad(\I_h )$, written as in Line~5, is
    $G=[w,y_2-u_2,\dots,y_n-u_{3n}]$.
After Line 6, one has
$$\big\{\big(v_1(t),\dots,v_n(t)\big): w(t)=0, t\in\R\big\}=\pi_n(V_{\R}(\I[f,z])).$$
From the latter equality and \eqref{eq:Sol_slack}, we get \eqref{eq:SolAa}.
Therefore, we claim that the output of \Cref{alg:Sol} is a univariate representation of $\CP(f)$.
\qed
\end{proof}

\begin{remark}
As seen in the proof of \Cref{prop:corectCP}, the while loop is executed at most $(3n-1)\delta(\delta-1)/2+1$ times, where $\delta=\# V_{\C}(\I[f,z])$. By \Cref{prop:card} we get an upper bound on the number of iterations of the loop by \[(3n-1)2^n(d+1)^n(2^n(d+1)^n-1)/2+1.\]
However, in practice the matrix $H$ should be chosen randomly with rational entries, hence we do not need to use the while loop in Line 2.
\end{remark}

\subsection{Computing Univariate Representations of $\Solln(f)$}

A univariate representation of $\Solln(f)$ can be computed based on the previous algorithm. To complete this task, we propose \Cref{alg:Solln}, but here we briefly describe it and skip the proof of its correctness. Once a univariate representation $[w,v_1,\dots,v_n]$ of $\Sol(f)$ obtained as the output of \Cref{alg:Sol}, as mentioned in the proof of \Cref{thm:sol_least}, we compute $\hat{\varphi}$ as in \eqref{eq:phi} and then $\varphi$ is the remainder of the division of $\hat{\varphi}$ by $w$.  

\begin{algorithm}
    \caption{Computing a univariate representation of $\Solln(f)$}\label{alg:Solln}
    \smallskip

    \textbf{Input:} \hieu{$f\in\R[x]^n$ such that $f$ is $\D_0$}
    \smallskip

    \textbf{Output:} $[w,v_1,\dots,v_n,\varphi]\subset \R[t]$ - a univariate representation of $\Solln(f)$ satisfying \eqref{eq:univ_least}, i.e.,
\[\Solln(f)=\big\{\big(v_1(t),\dots,v_n(t)\big): t\in\arg\min\{\varphi(t): w(t)=0, t\in\R\}\big\}\]
    \begin{itemize}
    \smallskip
        \item [\rm 1:]     
\hieu{Compute $[w,v_1,\dots,v_n]\subset \R[t]$ - a univariate representation of $\Sol(f)$}
    \smallskip
        
       \item [\rm 2:]  Compute $\hat{\varphi}(t)=v_1^2(t)+\dots+v_n^2(t)$ and $\varphi$ - the remainder of the division of $\hat{\varphi}$ by $w$
\smallskip
\item [\rm 3:] Return $[w,v_1,\dots,v_n,\varphi]$
    \end{itemize}
\end{algorithm}

\subsection{Computing Univariate Representations of $\Solsp(f)$}
We design \Cref{alg:Solsp} to compute a univariate representation of the sparse solution set of a $\D_0$ problem. 

\noindent \textit{Description}. The output of \Cref{alg:Solsp} consists of a subset $\Omega\subset 2^{[n]}$ and $[w,v_1,\dots,v_n]\subset \R[t]$ such that, for every element $\ell$ in $\Omega$, the following set is nonempty: 
\begin{equation}\label{eq:t_nonempty}
\{t\in\R:w_{\ell}(t)=0, w_{\bar\ell}(t)\neq 0\},
\end{equation}
and for each real number $\tau$ in the latter set, $(v_1(\tau),\dots, v_n(t))$ is a sparse solution whose coordinates at $\ell$ are zero.

\begin{algorithm}
    \caption{Computing a univariate representation of $\Solsp(f)$}\label{alg:Solsp}
    \smallskip

    \textbf{Input:} \hieu{$f\in\R[x]^n$ such that $f$ is $\D_0$}
    \smallskip

    \textbf{Output:} $\Omega$ and $[w,v_1,\dots,v_n]$ such that
\begin{equation}\label{eq:univ_sp2}
\Solsp(f)=\bigcup_{\ell\in \Omega}\Big\{\big(v_1(t),\dots,v_n(t)\big): \{t\in\R:w_{\ell}(t)=0, w_{\bar\ell}(t)\neq 0\}\neq \emptyset\Big\}.
\end{equation}
    
\begin{itemize}
\item [\rm 1:]     
\hieu{Compute $[w,v_1,\dots,v_n]\subset \R[t]$ - a univariate representation of $\Sol(f)$}
    \smallskip
\item [\rm 2:] Set $k:=0$ and $\Omega:=\emptyset$ 
\smallskip
\item [\rm 3:] For $j$ from $n$ to $1$ do
\smallskip
\begin{itemize}
    \item [\rm 4:] For each $\ell\subseteq [n]$ such that $ \#\ell  = j$, if $\{t\in\R:w_{\ell}(t)=0, w_{\bar\ell}(t)\neq 0\}\neq \emptyset$, then set $k:=j$ and $\Omega:=\Omega \cup \ell$
    \smallskip
    \item [\rm 5:] If $k>0$, then stop the for loop in Line 3
        \end{itemize}
\smallskip
\item [\rm 6:] Return $\Omega$ and $[w,v_1,\dots,v_n]$
    \end{itemize}
\end{algorithm}

In \Cref{alg:Solsp}, $k$ indicates the number of zero coordinates of a sparse solution, and $\Omega \subset 2^{[n]}$ is displayed as the set of all $\ell\subset [n]$ with $\#\ell  = k$ such that there is a sparse solution whose
coordinates at $\ell$ are zero. In the initial stage, we set $k=0$ and $\Omega=\emptyset$, which indicates that the problem lacks sparse solutions. 

Since $k$ belongs to $\{0,1,\dots,n\}$ and is the maximum number of zero coordinates, in Line 3 we reduce the value of $j$ starting from $n$. The for loop is stopped when the first index set $\ell$, of a sparse solution, is found. We note that, for each $j$, there are $n!/(j!(n-j)!)$ subsets of $[n]$ with cardinality $j$. Once a sparse solution is found, in Line 4, we for each $\ell\subseteq [n]$ with $k$ elements, if $w_{\ell}$ has real roots and $w_{\bar\ell}$ has no real roots, add this set to $\Omega$.

\noindent \textit{Correctness}. The correctness of \Cref{alg:Solsp} is briefly explained as follows: After the for loop in Line 3, $k$ is the value of $\|\alpha\|_0$ for a sparse solution $\alpha$. Clearly, $\ell\in \Omega$ if and only if the set \eqref{eq:t_nonempty} is non-empty. Furthermore
for each $\tau$ in the set \eqref{eq:t_nonempty}, $(v_1(\tau),\dots, v_n(\tau))$ is a sparse solution whose coordinates at $\ell$ are zero. Hence, we get the equality \eqref{eq:univ_sp2}.

\begin{remark}
The Shape Lemma still holds if we replace the field of real numbers $\R$ with the field of rational numbers $\Q$, i.e. if $\I$ is an ideal in the ring $\Q[x]$ and satisfies the conditions in \Cref{lm:shape} then the involved polynomials $w,u_2,\dots, u_n$ have rational coefficients.
Therefore, if the input polynomial mapping $f$ has rational coefficients, then the univariate polynomials in the aforementioned univariate representations also have rational coefficients.
\end{remark}

\section{Computing Numerical Solutions for $\D_0$ Problems}\label{sec:enum}
This section deals with enumerating (least-norm, sparse) solutions, i.e. explicitly computing all such solutions, 
of a polynomial complementarity problem $\CP(f)$, where $f$ is $\D_0$.

\subsection{Enumerating Solutions}\label{subsec:enumerate}

It is worth to remark that the number of real roots of $w$ can be greater than the cardinality of $\Sol(f)$; in particular, from the proof of \Cref{prop:card}, one has 
$$\#\{t\in\R:w(t)=0\}\le 2^n\times\#\Sol(f).$$
The equality can be reached. For example, in \Cref{ex:CP1}, $w$ has four real roots, says $-8,-2,0,6$, while the problem has only one solution $(1,1)$. 
Due to the one-to-many correspondence, enumerating $\Sol(f)$ by directly approximating the real roots of $w$ may be incorrect.
The real roots can be computed numerically; by evaluating the values of $v_1,v_2$ at these points, we obtain four approximate solutions of $\CP(f)$ as follows:
\begin{center}
    \hieu{\small \begin{tabular}{cc}
 (0.9999999996, 0.9999999996),  & (1.0000000000, 1.0000000000),  \\
(1.0000000000, 1.0000000000), & (1.0000000100, 1.0000000100),
    \end{tabular}}
\end{center}
which are different, but very close to the actual solution $(1,1)$.

From the above issue, in order to enumerate the points in $\Sol(f)$ numerically, we propose the following procedure using a given precision pair $\gamma=(\gamma_1,\gamma_2)$, where $\gamma_1> 0$ and  $\gamma_2> 0$, including two phases as follows:

\begin{itemize}
    \item \textit{Phase 1}. We first compute a univariate representation $[w,v_1,\dots,v_n]$ of $\Sol(f)$ by using \Cref{alg:Sol}. 
With the precision $\gamma_1$, we next compute numerically the real roots of $w$ and denote by 
$$A=\big\{\big(v_1(\tau),\dots,v_n(\tau)\big): \tau \text{ is a real root of } w \text{ with precision } \gamma_1\big\}.$$ 
\end{itemize}

For each actual solution $\alpha$, there can be $2^n$ points in $A$, denoted by the set $A_{\alpha}$, which are close to $\alpha$.
\begin{itemize}
    \item \textit{Phase 2}.  We then reduce $A_{\alpha}$ to one point with precision $\gamma_2$ using the following criterion: if $\alpha^1$ and $\alpha^2$ are in $A$ and $\|\alpha^1-\alpha^2\|<\gamma_2$, then we remove one point of them from the set $A$. 
\end{itemize}

Going back to \Cref{ex:CP1}, we take $\gamma_1=10^{-10}$ and $\gamma_2=10^{-6}$, after Phase 2, the four approximate solutions are reduced to $(1,1)$.

\begin{remark}
The method using semidefinite relaxations in \cite{zhao2019semidefinite} can be modified to enumerate the solutions of $\D_0$ problems, since they have finitely many solutions. Our method in this paper does this task without using semidefinite relaxations; this method uses the Shape Lemma employed in recent polynomial optimization work, e.g., \cite{magron2023,hieu2023computing}. A method using Gr\"{o}bner bases to compute solutions to such problems has been briefly described in \cite{yang1999grobner}, but the system presented there still contains inequalities. Furthermore, the state-of-the-art algorithms for computing real roots of polynomial systems having finitely many complex zeros by computer algebra methods are discussed in \cite{berthomieu}.
\end{remark}

\subsection{Experiments on Small-scale Problems with Worst-case Scenarios}

It should be noted that complementarity problems governed by nonlinear or tensor/polynomial data (without a special structure) are generally hard to solve because of the algorithmic difficulties that arise with higher degrees. Thus, the sizes solved in other papers, e.g. \cite[Section 4]{zhao2019semidefinite}, are small. The present paper also faces the same issue.

To see the computational behavior of the procedure proposed in \Cref{subsec:enumerate}, we report our experiments on small-scale problems with rational coefficients such that their numbers of solutions are maximal. 

The procedure is implemented in {\sc Maple}, and the results are obtained on an Intel i7 - 8665U CPU (1.90GHz) with 32 GB of RAM.
The package $\mathtt{Groebner}$ can handle symbolic tasks in \Cref{alg:Sol}. To compute the radical of an ideal, we use the command $\mathtt{Radical}$; to compute the reduced Gr\"{o}bner basis of an ideal with respect to a given lexicographic order, we use the command $\mathtt{Basis}$. 
To enumerate solutions, we use numerical computations by isolating real roots of $w$. We set the variable $\texttt{Digits}:= 10+\deg w$, where $\texttt{Digits}$ controls the number of digits that {\sc Maple} uses when making calculations with software floating-point numbers, and the second accuracy $\gamma_2$ is fixed to $10^{-4}$.

We perform our experiment with two families of small-scale complementarity problems. The first family is given by the polynomials in \Cref{rmk:degTCP2} with $n=2$, i.e. $p=(p_1,p_2)$, where $p_i=(x_i-1)(x_i-2)\dots (x_i-d)$ 
for $i=1,2$.
Let $d$ be in $\{2,4,6\}$, the computational results of the procedure described in \Cref{subsec:enumerate} are given in the following table:

\begin{center}
{\small
\begin{tabular}{c|cc|cc|cc|cc|}
\cline{2-9}
  & \multicolumn{2}{c|}{Alg 1} & \multicolumn{2}{c|}{all solutions} & \multicolumn{2}{c|}{ln solutions}      & \multicolumn{2}{c|}{sp solutions}      \\ \hline
\multicolumn{1}{|c|}{$d$} & $\deg w$       & time      & $\#\Sol$           & time          & \multicolumn{1}{c|}{$\#\Solln$} & time & \multicolumn{1}{c|}{$\#\Solsp$} & time \\ \hline
\multicolumn{1}{|c|}{2}   & 36             &     0.59      & 9                  & 9.40          &  \multicolumn{1}{c|}{1}       &   0.80   & \multicolumn{1}{c|}{1}           &  1.10     \\
\multicolumn{1}{|c|}{4}   & 100            &    6.95       & 25                 & 80.8         & \multicolumn{1}{c|}{1}       &  11.7    & \multicolumn{1}{c|}{1}           & 4.35     \\
\multicolumn{1}{|c|}{6}   & 144            &  119         & 49                 & 616          & \multicolumn{1}{c|}{1}        & 98.3     & \multicolumn{1}{c|}{1}           &  53.7    \\ \hline
\end{tabular}
}
\medskip

\textbf{Table 1}. Computational results on $p$
\end{center}

In Table 1, the time is given in seconds; $\#\Sol$, $\#\Solln$, and $\#\Solsp$ are respectively the cardinality of the solution set, least-norm solution set, and sparse solution set of $\CP(p)$.

The second family is quadratic complementarity problems, i.e. $d=2$ is fixed, given $q=(q_1,\dots,q_n)$, where $q_i=x_i^2-3x_i+1$ for $i\in[n]$. Let $n$ run from $1$ to $3$, the computational results are given in the following table:

\begin{center}
{\small
\begin{tabular}{c|cc|cc|cc|cc|}
\cline{2-9}
& \multicolumn{2}{c|}{Alg 1} & \multicolumn{2}{c|}{all solutions} & \multicolumn{2}{c|}{ln solutions}      & \multicolumn{2}{c|}{sp solutions}      \\ \hline
\multicolumn{1}{|c|}{$n$} & $\deg w$       & time      & $\#\Sol$           & time          & \multicolumn{1}{c|}{$\#\Solln$} & time & \multicolumn{1}{c|}{$\#\Solsp$} & time \\ \hline
\multicolumn{1}{|c|}{1}   & 6              &    0.12       & 3                  &     0.44          & \multicolumn{1}{c|}{1}           &    0.14  & \multicolumn{1}{c|}{1}           &   0.26   \\
\multicolumn{1}{|c|}{2}   & 36             &  0.65         & 9                  & 9.65              & \multicolumn{1}{c|}{1}           &  0.79   & \multicolumn{1}{c|}{1}           & 1.13     \\
\multicolumn{1}{|c|}{3}   & 216            &     126      & 27                 &    1084           & \multicolumn{1}{c|}{1}           & 120     & \multicolumn{1}{c|}{1}           &    4474  \\ \hline
\end{tabular}
}

\medskip
\textbf{Table 2}. Computational results on $q$
\end{center}

In Table 2, for the case $n=3$, the time to compute the sparse solution set is very expensive because of the computation of $w_{[3]}$ - the greatest common divisor of $w,v_1,v_2$, and $v_3$ whose degrees are 216 or 215 with large-scale coefficients; the time for only this task is $4431$ seconds.

\section{Solving Copositive PCPs}\label{sec:copositive}
This section is devoted to solving another class of PCPs that may not be $\D_0$. In particular, we focus on PCPs where involved mappings are copositive and approximate them by $\D_0$ problems. 

One says that a polynomial mapping $g:\R^n\to \R^n$ is \textit{copositive} if $\left\langle \alpha \mid  g(\alpha) \right\rangle \geq 0$ for all $\alpha$ in $\R_+^n$. Assume that $\deg g = d$. Denote by $g^{\infty}$ the ``leading" term of $g$, meaning that $g_i^{\infty}$ is the homogeneous term of degree $d$ of $g_i$, for $i\in[n]$.

Recall from \cite[Corollary 7.2]{gowda2016polynomial} that if $f$ or $f^{\infty}$ is copositive and $\Sol(f^{\infty})=\{0\}$, then $\Sol(f+a)$ is nonempty and compact for every vector $a$ in $\R^n$.

\begin{proposition}\label{prop:genericity2} Suppose that $f$ or $f^{\infty}$ is copositive and $\Sol(f^{\infty})=\{0\}$. The following statements hold:

\begin{itemize}
    \item[$\rm a)$] There exists a generic set $\mathcal{O}$ in $\R^n$ such that $\CP(f+a)$ is $\D_0$ for every $a\in \mathcal{O}$.
    \item[$\rm b)$] For every $\mu>0$, there exists $\varepsilon > 0$ such that $\Sol(f+a)$ is nonempty and 
$\Sol(f+a)\subseteq \Sol(f) + \mathbb{B}(0,\mu)$
    for any $a\in \R^n$ satisfying  $\|a\| < \varepsilon$.
\end{itemize}
 \end{proposition}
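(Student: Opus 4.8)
The plan is to prove the two statements separately, using the genericity result for $\D_0$ problems (Proposition \ref{prop:genericity}) for part a) and a standard Berge-type upper semicontinuity argument for part b).

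For part a), the natural strategy is to intersect the generic set coming from Proposition \ref{prop:genericity} with the line of perturbations $\{f + a : a \in \R^n\}$ and show the intersection is still generic in $\R^n$. More precisely, write $f(x) = b + R^T\vecx(x)$ in the coordinates of \eqref{eq:X}; then $f+a = (b+a) + R^T\vecx(x)$, so perturbing $f$ by $a \in \R^n$ moves only the constant-term block of the parameter vector $(a',Q) \in \R^{n\times\rho}$, keeping $Q=R$ fixed. I would therefore revisit the proof of Proposition \ref{prop:genericity}: for each nonempty $\ell \subseteq [n]$, the mapping $\Phi_\ell$ has Jacobian $J_\ell$ containing the diagonal block $\partial_a \Phi_\ell = \mathrm{diag}(x_1,\dots,x_l)$, which is already full rank on $\mathcal{V}_\ell$ using only differentiation in the constant terms $a$. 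Hence $0_\ell$ is a regular value of the restricted map $\Phi_\ell$ in which $Q$ is frozen at $R$ and only $a \in \R^n$ varies, and Theorem \ref{thm:Sard_para} applied with parameter space $\R^n$ (rather than $\R^{n\times\rho}$) yields an open dense semi-algebraic $\mathcal{O}_\ell \subseteq \R^n$ such that $\Phi_{\ell,a}^{-1}(0_\ell)$ is finite for $a \in \mathcal{O}_\ell$. Taking $\mathcal{O} = \bigcup_{\ell \neq \emptyset} \mathcal{O}_\ell$ and using the same covering $V_\C(\I[f+a]) \subseteq \{0\} \cup \bigcup_\ell \Phi_{\ell,a}^{-1}(0_\ell)$ shows $f+a$ is $\D_0$ for every $a \in \mathcal{O}$. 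The copositivity and $\Sol(f^\infty)=\{0\}$ hypotheses play no role here; they are stated only so that part a) fits thematically into the copositive-approximation narrative, and they are invoked in part b).

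For part b), fix $\mu > 0$ and argue by contradiction: suppose there is a sequence $a^k \to 0$ and points $\alpha^k \in \Sol(f+a^k)$ with $\alpha^k \notin \Sol(f) + \mathbb{B}(0,\mu)$. By the cited result \cite[Corollary 7.2]{gowda2016polynomial}, $\Sol(f+a)$ is nonempty for every $a$, so the sequence exists as soon as the inclusion fails. The key is a uniform boundedness claim: there is a bounded set containing $\Sol(f+a)$ for all small $\|a\|$. This follows from the same asymptotic/recession argument behind \cite[Corollary 7.2]{gowda2016polynomial} — if $\|\alpha^k\|\to\infty$, normalize $\alpha^k/\|\alpha^k\| \to v$ with $v \in \R_+^n\setminus\{0\}$, and a leading-order analysis of the complementarity conditions for $f+a^k$ forces $v \in \Sol(f^\infty) = \{0\}$, a contradiction. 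Granting boundedness, pass to a subsequence with $\alpha^k \to \bar\alpha$; continuity of $f$ and of the complementarity conditions $x\ge 0$, $f(x)\ge 0$, $\langle x\mid f(x)\rangle = 0$ gives $\bar\alpha \in \Sol(f)$, contradicting $\mathrm{dist}(\alpha^k, \Sol(f)) \ge \mu$ for all $k$. This yields the desired $\varepsilon$.

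The main obstacle is the uniform boundedness step in part b): making precise that $\Sol(f+a)$ stays in a fixed ball for all sufficiently small $\|a\|$, rather than merely being bounded for each individual $a$. The cleanest route is to recognize that the proof of \cite[Corollary 7.2]{gowda2016polynomial} already delivers a bound depending only on a bound for $\|a\|$ (the recession-cone argument is uniform over $a$ in a bounded set), so one can cite it in that strengthened-but-implicit form; alternatively one reproves it via the normalization argument sketched above. Part a) is essentially a bookkeeping restriction of Proposition \ref{prop:genericity}'s proof to a lower-dimensional parameter slice and presents no real difficulty, provided one checks that Theorem \ref{thm:Sard_para} is being applied with a legitimate polynomial mapping on $\R^n \times \mathcal{V}_\ell$ and that the full-rank verification of $J_\ell$ indeed only used the $\partial_a$ block.
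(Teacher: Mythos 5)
Your plan is correct and for part a) it coincides with the paper's own approach: the paper only says the proof is ``analogous to the proof of Proposition~\ref{prop:genericity},'' and your restriction of the Sard argument to the $n$-dimensional slice of constant-term perturbations (with the non-constant coefficient block $Q$ frozen at that of $f$) is exactly what that analogy amounts to. You correctly observe that the full-rank verification of $J_\ell$ in Proposition~\ref{prop:genericity} already rests only on the diagonal block $\partial_a\Phi_\ell=\mathrm{diag}(x_1,\dots,x_l)$, so Theorem~\ref{thm:Sard_para} can be invoked with parameter space $\R^n$ rather than $\R^{n\times\rho}$, and you are right that the copositivity and $\Sol(f^\infty)=\{0\}$ hypotheses are not used in this part. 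Where you genuinely diverge is in part b): the paper simply cites \cite[Corollary~7.2]{gowda2016polynomial} for nonemptiness and \cite[Corollary~4.1]{hieu2020solution} for the inclusion $\Sol(f+a)\subseteq\Sol(f)+\mathbb{B}(0,\mu)$, whereas you reprove the inclusion via a contradiction/compactness argument, normalizing a putative unbounded sequence $\alpha^k/\|\alpha^k\|\to v$ and using the leading homogeneous part to force $v\in\Sol(f^\infty)=\{0\}$, which is absurd. That recession-cone argument is the standard mechanism behind the cited stability corollary; your presentation is more self-contained and makes the uniform (in $a$) boundedness explicit, which is indeed the crux, while the paper's presentation is shorter but opaque. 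Both routes are valid.
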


 \begin{proof} The proof of the first statement is analogous to the proof of \Cref{prop:genericity}. 
 The second one is obtained by applying directly \cite[Corollary 7.2]{gowda2016polynomial} and \cite[Corollary 4.1]{hieu2020solution} to polynomial complementarity problems.
 \qed
 \end{proof}

It is noteworthy that \Cref{prop:genericity2} enables us to solve approximately a PCP satisfying the assumptions therein: We take $a\in\R^n$ randomly \hieu{(then $f+a$ is $\D_0$)} and $\|a\|$ small enough, and then solve $\CP(f+a)$ by using the method proposed in \Cref{subsec:enumerate}. A solution of $\CP(f+a)$ can be considered as an approximate solution to the original problem.

\begin{example} Consider the polynomials in two variables $x_1,x_2$ given in \cite[Example 5.3]{hieu2019r0}, $f_1=f_2=x_1^2+x_2^2-1$. It is easy to check that $f_1^{\infty}=f_2^{\infty}=x_1^2+x_2^2$ and $f^{\infty}$ is copositive and that $\Sol(f^{\infty})=\{0\}$. Moreover, $\CP(f)$ has infinitely many solutions
$$ \Sol(f)= \{(\alpha_1,\alpha_2)\in\R^2: \alpha_1^2 + \alpha_2^2=1, \alpha_1 \geq 0,\alpha_2\geq 0\}.$$
We observer that $\CP(f+ a)$, where $a=(a_1,a_2)$, is $\D_0$ if any only if $a_1\neq a_2$. Taking $a_1=1/10^6$ and $a_1=1/10^5$ and computing solutions to $\CP(f+a)$ by using the proposed method, the perturbed problem has unique solution $\alpha_a=(0.999999, 0.000000)$ which is very close to the solution $(1,0)$ of $\CP(f)$.
    
\end{example}

\section*{Conclusions}
We introduced a class of generic polynomial complementarity problems, named $\D_0$, and proposed algorithms to compute univariate representations of their solutions.  We also presented a method for solving copositive problems by approximating them with  $\D_0$ problems. The experimental results showed that these algorithms and method are efficient for small-scale problems.

\appendix

\section*{Appendix: Proof of Theorem \ref{thm:Sard_para}}
The proof of Theorem \ref{thm:Sard_para} is analogous to the proof of \cite[Theorem 1.10]{pham2016genericity}, which is presented in the semi-algebraic setting. Therefore, only the most important modifications are shown below.

\begin{proof} We only need to prove that there exists a set $\mathcal{O}$ in $\R^m$ such that the three following conditions hold: $\R^m\setminus \mathcal{O}$ has Lebesgue measure zero; $\mathcal{O}$ is semi-algebraic; and,  for any $c \in \mathcal{O}$, $0$ is a regular value of the mapping $\Phi_{c}$.

We assume without loss of generality that $\Phi^{-1}(0)$ is non-empty. It follows from the Implicit Function Theorem that $\Phi^{-1}(0)$ is a smooth sub-manifold in $\R^m\times V$, where $\Phi^{-1}(0)$ is considered as a smooth manifold in $\R^m\times \R^{2n}$.

Consider the restriction of $\pi$, where $\pi: \R^m\times V \to \R^m, (c,x)\mapsto c$ is a natural projection, on $\Phi^{-1}(0)$.  By Sard's theorem \cite[Theorem 6.10]{lee2003smooth} (which applies to smooth mappings between real manifolds), the set of critical
values of the restriction mapping $\pi|_{\Phi^{-1}(0)}$, denoted by $\mathcal{C}$, has Lebesgue measure zero in $\R^m$. 
It is not hard to see that $\mathcal{C}$ is a semi-algebraic set in $\R^m$. We define $\mathcal{O}:=\R^m\setminus\mathcal{C}$.
Consequently, $\mathcal{O}$ is a semi-algebraic set and $\R^m\setminus\mathcal{O}$ has Lebesgue measure zero  in $\R^m$.

By repeating the argument in the proof of \cite[Theorem 1.10]{pham2016genericity}, we can show that $0$ is a regular value of the mapping $\Phi_{c} $ for any $c \in \mathcal{O}$. 
\qed
\end{proof}

\begin{acknowledgements}
The authors would like to thank the editor and anonymous referees for their comments and Jo\~{a}o M. Pereira for his feedback on \Cref{rmk:degTCP2}.  
\end{acknowledgements}

\noindent \small{\textbf{Funding} This work was supported by the JSPS KAKENHI Grant Number 23H03351.}
\bibliographystyle{spmpsci.bst}
\bibliography{references.bib}

\end{document}